\theoremstyle{plain}
   \newtheorem{theorem}{Theorem}[section]
   \newtheorem{proposition}[theorem]{Proposition}
   \newtheorem{lemma}[theorem]{Lemma}
   \newtheorem{corollary}[theorem]{Corollary}
\theoremstyle{definition}
   \newtheorem{definition}{Definition}[section]
   \newtheorem{example}{Example}[section] 
\theoremstyle{remark}
 \newtheorem{remark}{Remark}[section]
\newcommand{\xx}{\mathbf{x}}
\newcommand{\yy}{\mathbf{y}}
\newcommand{\LL}{\mathcal{L}}
\newcommand{\GG}{\mathcal{G}}
\newcommand{\MM}{\mathcal{M}}
\newcommand{\CC}{\mathcal{C}}
\newcommand{\RR}{\mathcal{R}}
\newcommand{\TT}{\mathcal{T}}
\newcommand{\R}{\mathbb{R}}
\newcommand{\toric}{\mathsf{toric}}
\newcommand\independent{\protect\mathpalette{\protect\independenT}{\perp}}
\def\independenT#1#2{\mathrel{\rlap{$#1#2$}\mkern2mu{#1#2}}}
\def\newop#1{\expandafter\def\csname #1\endcsname{\mathop{\rm
#1}\nolimits}}
\keywords{decomposable models, context-specific independence,
Bayesian network, directed acyclic graph, toric ideal, algebraic statistics,  probability trees}
\subjclass[2020]{62R01, 62A09, 13P10, 13P25}
\begin{document}
\title[A new characterization of decomposable models]{A new characterization of discrete decomposable graphical models}

\date{\today}

\author{Eliana Duarte}
\address{Otto-von-Guericke Universit\"at Magdebug, Universit\"atsplatz 2, 39106 Magdeburg, Germany}
\email{eliana.duarte@ovgu.de}

\author{Liam Solus}
\address{Institutionen f\"or Matematik, KTH, SE-100 44 Stockholm, Sweden}
\email{solus@kth.se}

\begin{abstract}
Decomposable graphical models, also known as perfect DAG models, play a fundamental role in standard approaches to probabilistic inference via graph representations in modern machine learning and statistics.  
However, such models are limited by the assumption that the data-generating distribution does not entail strictly context-specific conditional independence relations.  
The family of staged tree models generalizes DAG models so as to accommodate context-specific knowledge.  
We provide a new characterization of perfect discrete DAG models in terms of their staged tree representations.  
This characterization identifies the family of balanced staged trees as the natural generalization of discrete decomposable models to the context-specific setting.
\end{abstract}

%

%

\maketitle
\thispagestyle{empty}

\section{Introduction}
\label{sec: introduction}
A graphical model is a collection of joint probability distributions of a vector of random variables $(X_1,\ldots, X_p)$ that satisfy conditional independence constraints specified by a graph with set of nodes $[p]:=\{1,\ldots, p\}$.  Within the class  of graphical models there are those whose underlying graph is undirected and those whose underlying graph
is a directed acyclic graph (DAG). Both classes of models are usually defined in two different ways, either parametrically by means of a clique factorization for undirected models and the recursive factorization property for directed acyclic models, or implicitly via the undirected or directed Markov properties associated to each graph.
The class of models for which these two perspectives coincide is the set of \emph{decomposable graphical models}. These models have several different equivalent combinatorial,  statistical and geometric characterizations. In combinatorial terms, a model is decomposable if and only if
its underlying undirected graph is chordal or, equivalently, its  underlying DAG is \emph{perfect} \cite{L96}. In statistics, decomposable models are the subclass of
DAG models that are linear exponential families \cite{GHKM01}. In terms of algebra and geometry, discrete decomposable models
are toric varieties defined by quadratic binomials which correspond to global separation statements in \cite{GMS06}.

The main theorem of this paper, Theorem~\ref{thm: perfectly balanced and stratified}, provides a novel combinatorial characterization of discrete decomposable graphical models using staged tree models. Staged tree models are statistical models
that encode context-specific conditional independence relations among events by the use of a combinatorial object called a \emph{staged tree} \cite{SA08}. We show that a discrete DAG model is  decomposable if and only if its staged tree representation is \emph{balanced} \cite{DG20}. 
This result translates the combinatorial condition of a perfect DAG to staged trees and establishes balanced staged tree models as a natural generalization of discrete decomposable DAG models to the context-specific setting.

\section{Staged trees and DAG Models}
\label{sec: staged trees for DAGs}
The class of staged tree models was first introduced in \cite{SA08}.
We refer the reader to \cite{CGS18} for a detailed introduction to this model class.
Given a directed graph $\GG = (V,E)$ with node set $V$ and collection of edges $E$, for $v\in V$ we call $w\in V$ a \emph{parent} of $v$ (in $\GG$) if $w\rightarrow v\in E$, and we let $\pa_\GG(v)$ denote the collection of all parents of $v$ in $\GG$.  
Conversely, $v$ is a called a \emph{child} of $w$ (in $\GG$), and we let $\ch_\GG(w)$ denote the collection of all children of $w$ in $\GG$.  
A \emph{rooted tree} $\TT = (V,E)$ is a directed graph whose skeleton (i.e., underlying undirected graph) is a tree containing a unique \emph{root} node, $r$, for which $\pa_\TT(r) = \emptyset$.  
It follows that for each $v\in V$ there is a unique directed path from $r$ to $v$ in $\TT$, which we denote by $\lambda(v)$; that is, $\lambda(v)$ denotes the collection of all edges constituting this unique directed path.   
We also denote the set of edges from a node $v$ to each of its children
by $E(v)$.  


\begin{definition}
\label{def: staged tree}

For a rooted tree $\TT = (V,E)$, a finite set of labels $\LL$, and a map $\theta: E \rightarrow \LL$ labeling the edges $E$ with elements of $\LL$, the pair $(\TT,\theta)$ is a \emph{staged tree} if 
\begin{enumerate}
    \item $|\theta(E(v))| = |E(v)|$ for all $v\in V$, and
    \item  for any two $v,w\in V$, $\theta(E(v))$ and $\theta(E(w))$ are either equal or disjoint. 
\end{enumerate}
\end{definition}
We typically refer to $\TT$ as a staged tree whenever the labeling $\theta$ is understood. 
The second condition of Definition~\ref{def: staged tree} partitions the vertices of $\TT$ into disjoint sets, called \emph{stages}, defined by the property that $v,w\in V$ are in the same stage if and only if $\theta(E(v)) = \theta(E(w))$. 
The partition of $V$ into its stages is the \emph{staging} of $\TT$. 
The space of canonical parameters of a staged tree $\TT$ is the set
\begin{equation*}
\label{eqn: parameter space}
\Theta_\TT := 
\left\{ \alpha\in\R^{|\LL|} :  \forall e\in E, \alpha_{\theta(e)}\in(0,1) \text{ and } \forall v \in V, \sum_{e\in E(v)}\alpha_{\theta(e)} = 1
\right\}.
\end{equation*}

\begin{definition}
\label{def: staged tree model}
Let ${\bf i}_\TT$ be the collection of all leaves of $\TT$.
 The 
 \emph{staged tree model} $\MM_{(\TT,\theta)}$ for $(\TT,\theta)$ is the image 
of the map $\psi_{\TT} : \Theta_\TT \longrightarrow \Delta^\circ_{|{\bf i}_\TT| -1}$ where
\begin{equation*}
\begin{split}
\psi_{\TT} &: \alpha \longmapsto f_v :=\left(\prod_{e\in E(\lambda(v))}\alpha_{\theta(e)}\right)_{v\in{\bf i}_\TT}, 
\end{split}
\end{equation*}
and $\Delta_{|{\bf i}_\TT|-1}^\circ$ denotes the $(|{\bf i}_\TT|-1)$-dimensional (open) probability simplex.
We say that $f\in\Delta_{|{\bf i}_{\TT}|-1}^\circ$ \emph{factorizes} according to $\TT$ if $f\in\MM_{(\TT,\theta)}$. 
\end{definition}

Typically, we will identify the leaves of a staged tree $\TT$ with the possible outcomes of some jointly distributed random variables -- which is why the model $\MM_{(\TT,\theta)}$ is defined to live in the probability simplex $\Delta_{|{\bf i}_\TT| -1}^\circ$.    
To make this identification formal, let $X_{[p]} = (X_1,\ldots,X_p)$ denote a vector of discrete random variables with joint state space $\RR$.  
Given a subset $S\subset [p]$, we let $\RR_S$ denote the restricted state space of the random subvector $X_S = (X_i : i\in S)$ of $X_{[p]}$.  
For a permutation $\pi_1\cdots\pi_p\in\mathfrak{S}_p$ of $[p]$, construct a rooted tree $\TT = (V,E)$ where $V := \{r\}\cup\bigcup_{j\in[p]}\RR_{\{\pi_1,\ldots,\pi_j\}}$, and $E$ is the set
\[
         \{r\rightarrow x_{\pi_1} : x_{\pi_1}\in\RR_{\{\pi_1\}}\}\cup\{x_{\pi_1}\cdots x_{\pi_{k-1}}  \rightarrow x_{\pi_1}\cdots x_{\pi_{k}}  : x_{\pi_1}\cdots x_{\pi_{k}}\in\RR_{\{\pi_1,\ldots,\pi_k\}}\}.
\]
For $v\in V$, we call the number of edges in $\lambda(v)$ the \emph{level} of $v$, and for $k\in\{0,\ldots, p\}$, we refer to the set of all nodes with level $k$, denoted $L_k$, as the $k^{th}$ \emph{level} of $\TT$. 
As the root node is the only element in $L_0$, we typically ignore this level. 
For trees defined from a vector of random variables $X_{[p]}$ as above, the $k^{th}$ level of $\TT$, for $k>0$, is simply the set of outcomes $\RR_{\{\pi_1,\ldots,\pi_k\}}$. 
Hence, we associate the variable $X_{\pi_k}$ with level $L_k$ and denote this association by $(L_1,\ldots,L_k)\sim(X_{\pi_1},\ldots,X_{\pi_p})$.  
We call the permutation $\pi$ the \emph{causal ordering} of $\TT$. 

Note that a tree $\TT$ defined for the vector $X_{[p]}$ is \emph{uniform}; that is, $|E(v)| = |E(w)|$ for any $v,w\in L_k$, for all $k\geq 0$. 
They are also \emph{stratified}; meaning that all of their leaves have the same level, and if any two nodes are in the same stage then they are also in the same level.
For a uniform and stratified staged tree model $\MM_{(\TT,\theta)}$, the parameter values on the edges $E(v)$ for $v\in \RR$ abide by the chain rule: 
\begin{lemma}
\label{lem: parameter interpretation}
Let $\TT$ be a uniform, stratified staged tree with levels $(L_1,\ldots,L_p)\sim(X_1,\ldots,X_p)$, let $f\in\MM_{(\TT,\theta)}$, and fix $v\in{\bf i}_\TT$.  
If $e\in \lambda(v)$ is the edge $u\rightarrow w$ between levels $L_{k-1}$ and $L_k$, where $u = x_1\cdots x_{k-1}$ and $w =  x_1\cdots x_{k-1}x_k$, then 
\[
\alpha_{\theta(e)} = f(x_k \mid x_1\cdots x_{k-1}).
\]
\end{lemma}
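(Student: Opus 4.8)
The plan is to unwind the definitions of the staged tree map $\psi_\TT$ and the parametrization of a tree built from the random vector $X_{[p]}$, and then recognize the claimed identity as an instance of the chain rule of probability. First I would fix $f\in\MM_{(\TT,\theta)}$, so there is a parameter $\alpha\in\Theta_\TT$ with $f = \psi_\TT(\alpha)$. For a leaf $v = x_1\cdots x_p \in {\bf i}_\TT$, the edges in $\lambda(v)$ are precisely $u_{k-1}\rightarrow u_k$ where $u_k = x_1\cdots x_k$ for $k=1,\ldots,p$ (with $u_0 = r$), so by definition $f(x_1\cdots x_p) = \prod_{k=1}^p \alpha_{\theta(e_k)}$, where $e_k$ is the edge between levels $L_{k-1}$ and $L_k$ on the path to $v$.

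Next I would obtain a formula for the marginal probability $f(x_1\cdots x_k)$ of the partial outcome $x_1\cdots x_k$, i.e.\ the probability that $(X_1,\ldots,X_k) = (x_1,\ldots,x_k)$. The key observation is that, because $\TT$ is uniform and stratified, the leaves $v'\in{\bf i}_\TT$ lying below the node $u_k = x_1\cdots x_k$ are exactly those whose path begins with the edges $e_1,\ldots,e_k$; summing $f(v')$ over all such leaves and factoring out the common prefix $\prod_{j=1}^k \alpha_{\theta(e_j)}$, the remaining sum telescopes to $1$ using the normalization constraints $\sum_{e\in E(w)}\alpha_{\theta(e)} = 1$ at each node $w$ along the way (applied repeatedly from the bottom level up). This yields $f(x_1\cdots x_k) = \prod_{j=1}^k \alpha_{\theta(e_j)}$ for every $k$, and in particular this probability is strictly positive since each $\alpha_{\theta(e)}\in(0,1)$, so the conditioning below is well defined.

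Finally I would combine the two displays: for the edge $e = e_k$ between levels $L_{k-1}$ and $L_k$ we have
\[
f(x_k \mid x_1\cdots x_{k-1}) = \frac{f(x_1\cdots x_{k-1}x_k)}{f(x_1\cdots x_{k-1})} = \frac{\prod_{j=1}^k \alpha_{\theta(e_j)}}{\prod_{j=1}^{k-1} \alpha_{\theta(e_j)}} = \alpha_{\theta(e_k)},
\]
which is exactly the claim. The only real work is the telescoping-sum step establishing $f(x_1\cdots x_k) = \prod_{j\le k}\alpha_{\theta(e_j)}$; this is where uniformity and stratification are genuinely used, since they guarantee that the subtree rooted at $u_k$ is again a uniform tree on the variables $X_{k+1},\ldots,X_p$ whose edge-label sums are all $1$. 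I would carry out this induction on $p-k$ (shrinking from the leaves), and everything else is bookkeeping with the definition of $\psi_\TT$.
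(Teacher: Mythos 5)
Your argument is correct, and it takes a genuinely different route from the paper. The paper proceeds by induction on the number of levels $p$: it equates the parametrized expression $f_v=\prod_{e\in E(\lambda(v))}\alpha_{\theta(e)}$ with the probabilistic chain rule $f_v=\prod_{k}f(x_k\mid x_1\cdots x_{k-1})$, uses the inductive hypothesis to identify the first $p-1$ factors on each side, and cancels to isolate the last edge. You instead prove the explicit prefix-marginal formula $f(x_1\cdots x_k)=\prod_{j\le k}\alpha_{\theta(e_j)}$ by summing the leaf probabilities below $u_k$, factoring out the common prefix, and collapsing the remaining sum to $1$ via the constraints $\sum_{e\in E(w)}\alpha_{\theta(e)}=1$ (an induction from the leaves upward), and then obtain the conditional as a ratio, noting positivity. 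What your route buys is that it treats all edges of $\lambda(v)$ at once, with no induction on $p$, and it makes explicit a step the paper's induction leaves implicit: to invoke the inductive hypothesis one needs that the marginal of $f$ on the first $p-1$ variables factorizes according to the truncated tree with the same parameters, which is precisely your telescoping computation. What the paper's route buys is brevity, by outsourcing the telescoping to the chain rule. One small refinement to your attribution of where the hypotheses enter: the collapse of the subtree sum to $1$ holds for any staged tree with parameters in $\Theta_\TT$ (it says the interpolating polynomial of any subtree evaluates to $1$ on $\Theta_\TT$); what the level structure coming from $X_{[p]}$ really provides is the identification of the leaves below $u_k=x_1\cdots x_k$ with exactly the outcomes of $\RR$ extending that prefix, so that the sum you compute is indeed the marginal $f(x_1\cdots x_k)$.
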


\begin{proof}
The proof is by induction on $p$, the number of levels of $\TT$.  
For the base case, let $p = 1$.  
Given $v\in{\bf i}_\TT$, we have that $f_v = \alpha_{\theta(e)} = f(X_1 = x_1)$, where $v = x_1\in\RR_{\{1\}}$.  
Suppose now that the claim holds for $p-1$ for some $p>1$.  
Then, $f_v = f(x_1\cdots x_p)$, where $v= x_1\cdots x_p$, and by definition of $f_v$ (via Definition~\ref{def: staged tree model}), we have
\begin{equation}
\label{eqn:lemprod}
f_v = \prod_{e\in E(\lambda(v))}\alpha_{\theta(e)}.
\end{equation}
Since $v= x_1\cdots x_p$, then by the chain rule
\begin{equation}
\label{eqn:chain}
f_v = f(x_1\cdots x_p) = \prod_{k \in[p]} f( x_k \mid x_1\cdots x_{k-1}).
\end{equation}
Let $e$ denote the unique edge in $\lambda(v)$ between a node in level $L_{p-1}$ of $\TT$ and a node in level $L_p$.  
By the inductive hypothesis, the equality of equations~\eqref{eqn:lemprod} and~\eqref{eqn:chain} reduces to 
$
\alpha_{\theta(e)} = f(x_p \mid x_1\cdots x_{p-1}),
$
which completes the proof.
\end{proof}

A uniform, stratified staged tree $\TT$ with levels $(L_1,\ldots,L_k)\sim(X_{\pi_1},\ldots,X_{\pi_p})$ is called \emph{compatibly labeled} if
\[
\theta(x_{\pi_1}\cdots x_{\pi_{k-1}} \rightarrow x_{\pi_1}\cdots x_{\pi_{k-1}}x_{\pi_k}) = \theta(y_{\pi_1}\cdots y_{\pi_{k-1}} \rightarrow y_{\pi_1}\cdots y_{\pi_{k-1}}x_{\pi_k})
\]
for all $x_{\pi_k}\in\RR_{\{\pi_k\}}$ whenever $x_{\pi_1}\cdots x_{\pi_{k-1}}$ and $y_{\pi_1}\cdots y_{\pi_{k-1}}$ are in the same stage.  
This condition ensures that edges emanating from $x_{\pi_1}\cdots x_{\pi_{k-1}}$ and $y_{\pi_1}\cdots y_{\pi_{k-1}}$ with endpoints corresponding to the same outcome $x_{\pi_{k}}$ of the next variable encode the invariance in conditional probabilities: 
\begin{equation}
    \label{eqn: invariances}
f(x_{\pi_k} \mid x_{\pi_1}\cdots x_{\pi_{k-1}}) = f(x_{\pi_k} \mid y_{\pi_1}\cdots y_{\pi_{k-1}}).
\end{equation}
These invariances, when considered in terms of the staging of the tree, collectively encode context-specific conditional independence relations satisfied by distributions in the model $\MM_{(\TT,\theta)}$.  
The most relevant examples of this are DAG models.

\subsection{Staged tree representations of DAG models}
\label{subsec: staged trees for DAGs}
Given a DAG $\GG = ([p],E)$ and a vector $X_{[p]}$ with state space $\RR$,  the \emph{DAG model} associated to $\GG$, denoted $\MM(\GG)$, is the set of all distributions $f\in\Delta_{|\RR|-1}^\circ$ that satisfy the recursive factorization
\begin{equation}
\label{eqn: DAG factorization}
f(X)=\prod_{k=1}^p f(X_k \mid X_{\pa_{\GG}(k)}).
\end{equation}
A DAG model $\MM(\GG)$ is called a \emph{decomposable model} if the DAG $\GG$ is \emph{perfect}; i.e., for all $k\in[p]$ the induced subDAG of $\GG$ on $\pa_\GG(k)$ is complete.
To derive our characterization of decomposable graphical models in terms of  their associated staged trees, we first characterize those staged trees that represent DAG models.  

A permutation $\pi = \pi_1\cdots\pi_p\in\mathfrak{S}_p$ of $[p]$ is called a \emph{linear extension} (or \emph{topological ordering}) of a DAG $\GG = ([p],E)$ if $\pi^{-1}_i < \pi^{-1}_j$ whenever $i\rightarrow j\in E$.  
Given a linear extension $\pi$ of a DAG $\GG$, we construct a staged tree $\TT_\GG^\pi$ for the vector $X_{[p]}$ with causal ordering $\pi$ by labeling the edges emanating from level $L_k = \RR_{\{\pi_1,\ldots, \pi_k\}}$ as follows:
Let 
\[
\LL := \{f(x_{\pi_{k+1}} \mid x_{\pa_\GG(\pi_{k+1})}) : k\in[p-1], x_{\pi_{k+1}}\in\RR_{\{\pi_{k+1}\}}, x_{\pa_\GG(\pi_{k+1})}\in \RR_{\pa_\GG(\pi_{k+1})}\},
\]
and the labeling map $\theta: E \longrightarrow \LL$ where
\[
\theta(x_{\pi_1}\cdots x_{\pi_k}\rightarrow x_{\pi_1}\cdots x_{\pi_k}x_{\pi_{k+1}}) = f(x_{\pi_{k+1}} \mid x_{\pa_\GG(\pi_{k+1})}).
\]
It follows that $(\TT_\GG^\pi,\theta)$ is stratified and uniform, and hence, for any $f\in\MM_{(\TT_\GG^\pi,\theta)}$,
\[
f(x_{\pi_{k+1}} \mid x_{\pi_1}\cdots x_{\pi_k}) = \alpha_{\theta(e)} = \alpha_{f(x_{\pi_{k+1}} \mid x_{\pa_\GG(\pi_{k+1})})},
\] 
where $e = x_{\pi_1}\cdots x_{\pi_k}\rightarrow x_{\pi_1}\cdots x_{\pi_k}x_{\pi_{k+1}}$. 
Moreover, $\TT_\GG^\pi$ is compatibly labeled with a stage 
\[
S_{y_{\pa_\GG(k+1)}} := \{ x=x_{\pi_1}\cdots x_{\pi_k} \in L_k : x_{\{\pi_1,\ldots,\pi_k\}\cap\pa_\GG(k+1)} = y_{\pa_\GG(k+1)}\}
\]
in level $L_k$ for each $k\in[p-1]$ and each $y_{\pa_\GG(k+1)}\in\RR_{\pa_\GG(k+1)}$.
Hence, the invariances in equation~\eqref{eqn: invariances} implied by $\TT_\GG^\pi$ being compatibly labeled imply that
\[
f(x_{\pi_{k+1}} \mid x_{\pi_1}\cdots x_{\pi_k}) = \alpha_{f(x_{\pi_{k+1}} \mid x_{\{\pi_1,\ldots,\pi_{k}\}\cap\pa_\GG(k+1)})} =
\alpha_{f(x_{\pi_{k+1}} \mid x_{\pa_\GG(k+1)})}, 
\]
where the last equality follows from the assumption that $\pi$ is a linear extension of $\GG$.  
Thus, $\MM_{(\TT_\GG^\pi,\theta)}$ consists of all distributions $f\in\Delta_{|\RR|-1}^\circ$ that satisfy equation~\eqref{eqn: DAG factorization}; that is, $\MM(\GG) = \MM_{(\TT_\GG^\pi,\theta)}$.  
Hence, $\TT_\GG^\pi$ is a stage tree representation of a DAG model, which we call the staged tree \emph{associated to} $\GG$ and $\pi$. 

\begin{remark}
From Lemma~\ref{lem: parameter interpretation}, the value $f(x_{\pi_{k+1}} \mid x_{\pi_1}\cdots x_{\pi_k})$ indicates a transition probability
for any staged tree that is compatibly labeled with levels $(L_1,\ldots, L_p)\sim (X_{\pi_1},\ldots, X_{\pi_p})$. The notation for this transition probability is different from the  labels in a staged tree representation of a DAG model:
$\LL := \{f(x_{\pi_{k+1}} \mid x_{\pa_\GG(\pi_{k+1})}) : k\in[p-1], x_{\pi_{k+1}}\in\RR_{\{\pi_{k+1}\}}, x_{\pa_\GG(\pi_{k+1})}\in \RR_{\pa_\GG(\pi_{k+1})}\}$. In the former case the notation indicates a parameter in the latter
case the notation indicates a symbol. For the proof of the main theorem we will manipulate the elements
in $\LL$ as indeterminates.
\end{remark}

\begin{example}
Consider the DAGs $\GG_{1}=([4], \{1\to 3, 2\to 3, 3\to 4, 2\to 4\})$ and
$\GG_2=([4],\{1\to 2, 2\to 3 , 2\to 4, 3\to 4\})$.
The trees $\TT_1, \TT_2$ in Figure~\ref{fig:staged tree representation} are staged tree representations of the DAG models
$\MM(\GG_1)$ and $\MM(\GG_2)$ with respect to the linear extension $\pi=1234$   where $X_1,X_2,X_3,X_4$
are binary random variables. An upwards arrow in Figure~\ref{fig:staged tree representation} represents the
outcome $0$ and a downwards arrow represents the outcome $1$. Two vertices with the same color (except white) in any of the
trees represent a context-specific conditional independence relation. For instance, the four colours in level three
of $\TT_1,\TT_2$, respectively, represent  $f(X_4|(X_1,X_2,X_3)=(0,i,j))=f(X_4|(X_1,X_2,X_3)=(1,i,j))$ for each $i,j\in \{0,1\}$, with a different stage/color for each possible pair $i,j$. Together these four context-specific conditional independence relations combine to make the CI relation $X_4 \independent X_1 |(X_2,X_3)$.
\end{example}

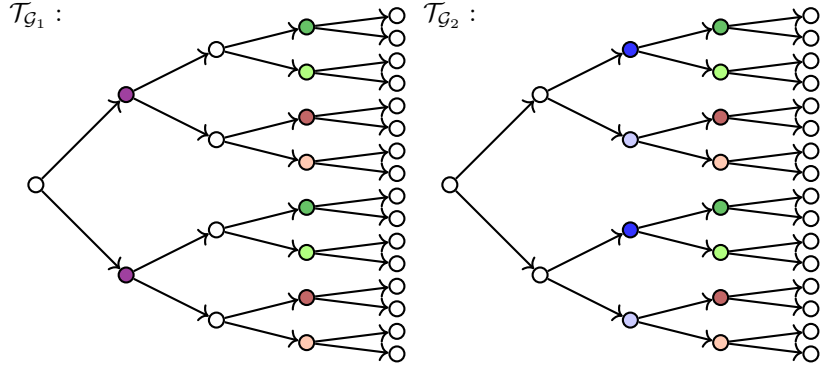
\begin{figure}
    \centering
     \begin{tikzpicture}[thick,scale=0.3]
	
 	 \node[circle, draw, fill=black!0, inner sep=2pt, minimum width=1pt] (w3) at (0,0)  {};

 	 \node[circle, draw, fill=black!0, inner sep=2pt, minimum width=1pt] (w4) at (0,-1) {};
 	 \node[circle, draw, fill=black!0, inner sep=2pt, minimum width=1pt] (w5) at (0,-2) {};
 	 \node[circle, draw, fill=black!0, inner sep=2pt, minimum width=1pt] (w6) at (0,-3) {};
	 
 	 \node[circle, draw, fill=black!0, inner sep=2pt, minimum width=1pt] (v3) at (0,-4)  {};
 	 \node[circle, draw, fill=black!0, inner sep=2pt, minimum width=1pt] (v4) at (0,-5) {};
 	 \node[circle, draw, fill=black!0, inner sep=2pt, minimum width=1pt] (v5) at (0,-6) {};
 	 \node[circle, draw, fill=black!0, inner sep=2pt, minimum width=1pt] (v6) at (0,-7) {};

	 \node[circle, draw, fill=green!60!black!60, inner sep=2pt, minimum width=2pt] (w1) at (-4,-.5) {};
 	 \node[circle, draw, fill=green!60!yellow!50, inner sep=2pt, minimum width=2pt] (w2) at (-4,-2.5) {}; 

 	 \node[circle, draw, fill=red!60!black!60, inner sep=2pt, minimum width=2pt] (v1) at (-4,-4.5) {};
 	 \node[circle, draw, fill=red!70!yellow!30, inner sep=2pt, minimum width=2pt] (v2) at (-4,-6.5) {};

 	 \node[circle, draw, fill=green!0, inner sep=2pt, minimum width=2pt] (w) at (-8,-1.5) {};

 	 \node[circle, draw, fill=green!0, inner sep=2pt, minimum width=2pt] (v) at (-8,-5.5) {};

 	 \node[circle, draw, fill=violet!75, inner sep=2pt, minimum width=2pt] (r) at (-12,-3.5) {};

 	 \node[circle, draw, fill=black!0, inner sep=2pt, minimum width=2pt] (w3i) at (0,-8)  {};
 	 \node[circle, draw, fill=black!0, inner sep=2pt, minimum width=2pt] (w4i) at (0,-9) {};
 	 \node[circle, draw, fill=black!0, inner sep=2pt, minimum width=2pt] (w5i) at (0,-10) {};
 	 \node[circle, draw, fill=black!0, inner sep=2pt, minimum width=2pt] (w6i) at (0,-11) {};
	 
 	 \node[circle, draw, fill=black!0, inner sep=2pt, minimum width=2pt] (v3i) at (0,-12)  {};
 	 \node[circle, draw, fill=black!0, inner sep=2pt, minimum width=2pt] (v4i) at (0,-13) {};
 	 \node[circle, draw, fill=black!0, inner sep=2pt, minimum width=2pt] (v5i) at (0,-14) {};
 	 \node[circle, draw, fill=black!0, inner sep=2pt, minimum width=2pt] (v6i) at (0,-15) {};

	 \node[circle, draw, fill=green!60!black!60, inner sep=2pt, minimum width=2pt] (w1i) at (-4,-8.5) {};
 	 \node[circle, draw, fill=green!60!yellow!50, inner sep=2pt, minimum width=2pt] (w2i) at (-4,-10.5) {};

 	 \node[circle, draw, fill= red!60!black!60 , inner sep=2pt, minimum width=2pt] (v1i) at (-4,-12.5) {};
 	 \node[circle, draw, fill= red!70!yellow!30, inner sep=2pt, minimum width=2pt] (v2i) at (-4,-14.5) {};

 	 \node[circle, draw, fill=red!0, inner sep=2pt, minimum width=2pt] (wi) at (-8,-9.5) {};

 	 \node[circle, draw, fill=red!0, inner sep=2pt, minimum width=2pt] (vi) at (-8,-13.5) {};

 	 \node[circle, draw, fill=violet!75, inner sep=2pt, minimum width=2pt] (ri) at (-12,-11.5) {};

 	 \node[circle, draw, fill=black!0, inner sep=2pt, minimum width=2pt] (I) at (-16,-7.5) {};
 	 \node (T2) at (-16,0) {$\TT_{\GG_1}:$};

 	 \draw[->]   (I) --    (r) ;
 	 \draw[->]   (I) --   (ri) ;

 	 \draw[->]   (r) --   (w) ;
 	 \draw[->]   (r) --   (v) ;

 	 \draw[->]   (w) --  (w1) ;
 	 \draw[->]   (w) --  (w2) ;

 	 \draw[->]   (w1) --   (w3) ;
 	 \draw[->]   (w1) --   (w4) ;
 	 \draw[->]   (w2) --  (w5) ;
 	 \draw[->]   (w2) --  (w6) ;

 	 \draw[->]   (v) --  (v1) ;
 	 \draw[->]   (v) --  (v2) ;

 	 \draw[->]   (v1) --  (v3) ;
 	 \draw[->]   (v1) --  (v4) ;
 	 \draw[->]   (v2) --  (v5) ;
 	 \draw[->]   (v2) --  (v6) ;

 	 \draw[->]   (ri) --   (wi) ;
 	 \draw[->]   (ri) -- (vi) ;

 	 \draw[->]   (wi) --  (w1i) ;
 	 \draw[->]   (wi) --  (w2i) ;

 	 \draw[->]   (w1i) --  (w3i) ;
 	 \draw[->]   (w1i) -- (w4i) ;
 	 \draw[->]   (w2i) --  (w5i) ;
 	 \draw[->]   (w2i) --  (w6i) ;

 	 \draw[->]   (vi) --  (v1i) ;
 	 \draw[->]   (vi) --  (v2i) ;

 	 \draw[->]   (v1i) --  (v3i) ;
 	 \draw[->]   (v1i) -- (v4i) ;
 	 \draw[->]   (v2i) -- (v5i) ;
 	 \draw[->]   (v2i) --  (v6i) ;

\end{tikzpicture}
 \begin{tikzpicture}[thick,scale=0.3]

 	 \node[circle, draw, fill=black!0, inner sep=2pt, minimum width=1pt] (w3) at (0,0)  {};

 	 \node[circle, draw, fill=black!0, inner sep=2pt, minimum width=1pt] (w4) at (0,-1) {};
 	 \node[circle, draw, fill=black!0, inner sep=2pt, minimum width=1pt] (w5) at (0,-2) {};
 	 \node[circle, draw, fill=black!0, inner sep=2pt, minimum width=1pt] (w6) at (0,-3) {};
	 
 	 \node[circle, draw, fill=black!0, inner sep=2pt, minimum width=1pt] (v3) at (0,-4)  {};
 	 \node[circle, draw, fill=black!0, inner sep=2pt, minimum width=1pt] (v4) at (0,-5) {};
 	 \node[circle, draw, fill=black!0, inner sep=2pt, minimum width=1pt] (v5) at (0,-6) {};
 	 \node[circle, draw, fill=black!0, inner sep=2pt, minimum width=1pt] (v6) at (0,-7) {};

	 \node[circle, draw, fill=green!60!black!60, inner sep=2pt, minimum width=2pt] (w1) at (-4,-.5) {};
 	 \node[circle, draw, fill=green!60!yellow!50, inner sep=2pt, minimum width=2pt] (w2) at (-4,-2.5) {}; 

 	 \node[circle, draw, fill=red!60!black!60, inner sep=2pt, minimum width=2pt] (v1) at (-4,-4.5) {};
 	 \node[circle, draw, fill=red!70!yellow!30, inner sep=2pt, minimum width=2pt] (v2) at (-4,-6.5) {};

 	 \node[circle, draw, fill=blue!80, inner sep=2pt, minimum width=2pt] (w) at (-8,-1.5) {};

 	 \node[circle, draw, fill=blue!20, inner sep=2pt, minimum width=2pt] (v) at (-8,-5.5) {};

 	 \node[circle, draw, fill=violet!0, inner sep=2pt, minimum width=2pt] (r) at (-12,-3.5) {};

 	 \node[circle, draw, fill=black!0, inner sep=2pt, minimum width=2pt] (w3i) at (0,-8)  {};
 	 \node[circle, draw, fill=black!0, inner sep=2pt, minimum width=2pt] (w4i) at (0,-9) {};
 	 \node[circle, draw, fill=black!0, inner sep=2pt, minimum width=2pt] (w5i) at (0,-10) {};
 	 \node[circle, draw, fill=black!0, inner sep=2pt, minimum width=2pt] (w6i) at (0,-11) {};
	 
 	 \node[circle, draw, fill=black!0, inner sep=2pt, minimum width=2pt] (v3i) at (0,-12)  {};
 	 \node[circle, draw, fill=black!0, inner sep=2pt, minimum width=2pt] (v4i) at (0,-13) {};
 	 \node[circle, draw, fill=black!0, inner sep=2pt, minimum width=2pt] (v5i) at (0,-14) {};
 	 \node[circle, draw, fill=black!0, inner sep=2pt, minimum width=2pt] (v6i) at (0,-15) {};

	 \node[circle, draw, fill=green!60!black!60, inner sep=2pt, minimum width=2pt] (w1i) at (-4,-8.5) {};
 	 \node[circle, draw, fill=green!60!yellow!50, inner sep=2pt, minimum width=2pt] (w2i) at (-4,-10.5) {};

 	 \node[circle, draw, fill= red!60!black!60 , inner sep=2pt, minimum width=2pt] (v1i) at (-4,-12.5) {};
 	 \node[circle, draw, fill= red!70!yellow!30, inner sep=2pt, minimum width=2pt] (v2i) at (-4,-14.5) {};

 	 \node[circle, draw, fill=blue!80, inner sep=2pt, minimum width=2pt] (wi) at (-8,-9.5) {};

 	 \node[circle, draw, fill=blue!20, inner sep=2pt, minimum width=2pt] (vi) at (-8,-13.5) {};

 	 \node[circle, draw, fill=violet!0, inner sep=2pt, minimum width=2pt] (ri) at (-12,-11.5) {};

 	 \node[circle, draw, fill=black!0, inner sep=2pt, minimum width=2pt] (I) at (-16,-7.5) {};
 	 
 	  \node (T2) at (-16,0) {$\TT_{\GG_2}:$};

 	 \draw[->]   (I) --    (r) ;
 	 \draw[->]   (I) --   (ri) ;

 	 \draw[->]   (r) --   (w) ;
 	 \draw[->]   (r) --   (v) ;

 	 \draw[->]   (w) --  (w1) ;
 	 \draw[->]   (w) --  (w2) ;

 	 \draw[->]   (w1) --   (w3) ;
 	 \draw[->]   (w1) --   (w4) ;
 	 \draw[->]   (w2) --  (w5) ;
 	 \draw[->]   (w2) --  (w6) ;

 	 \draw[->]   (v) --  (v1) ;
 	 \draw[->]   (v) --  (v2) ;

 	 \draw[->]   (v1) --  (v3) ;
 	 \draw[->]   (v1) --  (v4) ;
 	 \draw[->]   (v2) --  (v5) ;
 	 \draw[->]   (v2) --  (v6) ;

 	 \draw[->]   (ri) --   (wi) ;
 	 \draw[->]   (ri) -- (vi) ;

 	 \draw[->]   (wi) --  (w1i) ;
 	 \draw[->]   (wi) --  (w2i) ;

 	 \draw[->]   (w1i) --  (w3i) ;
 	 \draw[->]   (w1i) -- (w4i) ;
 	 \draw[->]   (w2i) --  (w5i) ;
 	 \draw[->]   (w2i) --  (w6i) ;

 	 \draw[->]   (vi) --  (v1i) ;
 	 \draw[->]   (vi) --  (v2i) ;

 	 \draw[->]   (v1i) --  (v3i) ;
 	 \draw[->]   (v1i) -- (v4i) ;
 	 \draw[->]   (v2i) -- (v5i) ;
 	 \draw[->]   (v2i) --  (v6i) ;

\end{tikzpicture}


    \caption{Staged tree representations of $\GG_{1}=([4], \{1\to 3, 2\to 3, 3\to 4, 2\to 4\})$ and
$\GG_2=([4],\{1\to 2, 2\to 3 , 2\to 4, 3\to 4\})$ with respect to the linear extension $\pi=1234$ with binary random variables
.}
    \label{fig:staged tree representation}
\end{figure}

Let $\TT_\mathbb{G}$ denote the collection of all staged trees $\TT_\GG^\pi$ associated to any DAG and any of its linear extensions. 
The following proposition characterizes the elements of  $\TT_\mathbb{G}$ in terms of the stages of the trees.
\begin{proposition}
\label{prop: characterization DAG staged trees}
A staged tree $\TT$ is in $\TT_{\mathbb{G}}$ if and only it is compatibly labeled with levels $(L_1,\ldots,L_p)\sim(X_{\pi_1},\ldots,X_{\pi_p})$ for some $\pi\in\mathfrak{S}_p$, and if for all $k\in[p-1]$ the level $L_k = \RR_{\{\pi_1,\ldots,\pi_k\}}$ is partitioned into stages 
\[
\bigsqcup_{\yy\in\RR_{\Pi_k}}S_{\yy}
\]
for some subset $\Pi_{k+1}\subset\{\pi_1,\ldots,\pi_k\}$, where 
$
S_{\yy} = \{ \xx\in L_k: \xx_{\Pi_{k+1}} = \yy\}.
$
\end{proposition}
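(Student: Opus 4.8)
The plan is to prove the two directions of the equivalence separately, noting that the ``only if'' direction is essentially already contained in the discussion preceding the statement.

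For the forward implication, suppose $\TT = \TT_\GG^\pi$ for a DAG $\GG = ([p],E)$ and a linear extension $\pi$ of $\GG$. By construction $\TT$ is compatibly labeled with levels $(L_1,\ldots,L_p)\sim(X_{\pi_1},\ldots,X_{\pi_p})$, so it remains only to exhibit the subsets $\Pi_{k+1}$. I would set $\Pi_{k+1} := \pa_\GG(\pi_{k+1})$ for each $k\in[p-1]$. Since $\pi$ is a linear extension, every parent of $\pi_{k+1}$ precedes it in $\pi$, so $\Pi_{k+1}\subseteq\{\pi_1,\ldots,\pi_k\}$ and hence $\{\pi_1,\ldots,\pi_k\}\cap\pa_\GG(\pi_{k+1}) = \pa_\GG(\pi_{k+1})$. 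The explicit description of the staging of $\TT_\GG^\pi$ recalled above then states precisely that $L_k$ is partitioned into the stages $S_\yy = \{\xx\in L_k : \xx_{\Pi_{k+1}} = \yy\}$ for $\yy\in\RR_{\Pi_{k+1}}$, which is the asserted form.

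For the reverse implication, suppose $\TT$ is compatibly labeled with levels $(L_1,\ldots,L_p)\sim(X_{\pi_1},\ldots,X_{\pi_p})$ and that each level $L_k$, $k\in[p-1]$, has the stated stage structure for some $\Pi_{k+1}\subseteq\{\pi_1,\ldots,\pi_k\}$. I would define a DAG $\GG = ([p],E)$ by declaring $\pa_\GG(\pi_{k+1}) := \Pi_{k+1}$ for $k\in[p-1]$ and $\pa_\GG(\pi_1) := \emptyset$; equivalently $E := \{\pi_i\to\pi_{k+1} : k\in[p-1],\ \pi_i\in\Pi_{k+1}\}$. Every edge then points forward along $\pi$ because $\Pi_{k+1}\subseteq\{\pi_1,\ldots,\pi_k\}$, so $\GG$ is acyclic and $\pi$ is a linear extension of it. It remains to check that $\TT = \TT_\GG^\pi$: both staged trees have the same underlying rooted tree, the one determined by $X_{[p]}$ and $\pi$; and by the computation in the forward direction the staging of $\TT_\GG^\pi$ on $L_k$ is the partition into the sets $\{\xx\in L_k:\xx_{\pa_\GG(\pi_{k+1})}=\yy\} = \{\xx\in L_k:\xx_{\Pi_{k+1}}=\yy\}$, which coincides with the staging of $\TT$ on $L_k$. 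Since both trees are uniform, stratified, and compatibly labeled, and a compatible labeling of a uniform, stratified staged tree is determined by its staging up to a bijective relabeling of $\LL$, the two staged trees agree (in particular one may take the label set of $\TT$ to be the canonical set $\LL$ attached to $\TT_\GG^\pi$).

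The computations are routine; the point that warrants care is the last one, namely the claim that a uniform, stratified, compatibly labeled staged tree is recovered from its underlying tree together with its staging up to renaming of labels. This is where the definition of a staged tree (the equal-or-disjoint condition of Definition~\ref{def: staged tree}) and of being compatibly labeled are genuinely used: the equal-or-disjoint condition forces the label sets $\theta(E(v))$ to be constant on stages and pairwise disjoint across distinct stages, while compatibility forces, within a stage, the label on the edge leading to a fixed outcome of the next variable to be the same for all nodes of that stage. Pinning this identification down — and correctly handling the degenerate level $L_0 = \{r\}$ and the root's outgoing edges, where $\Pi_1 = \emptyset$ — is the only mildly delicate part of the argument.
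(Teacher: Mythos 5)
Your proposal is correct and follows essentially the same route as the paper: the forward direction is read off from the construction of $\TT_\GG^\pi$ (with $\Pi_{k+1}=\pa_\GG(\pi_{k+1})$), and the converse defines the DAG by $\pa_\GG(\pi_{k+1}):=\Pi_{k+1}$, notes $\pi$ is a linear extension, and matches the stagings. Your extra remark that a uniform, stratified, compatibly labeled staged tree is determined by its staging up to relabeling only makes explicit the identification the paper leaves implicit when it says the stagings coincide.
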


\begin{proof}

The ``only if" direction ($\Rightarrow$) follows from the construction of $\TT_\mathbb{G}$.  
So it only remains to show that any compatibly labeled staged tree $\TT$ with levels $(L_1,\ldots,L_p)\sim(X_{\pi_1},\ldots,X_{\pi_p})$ and the specified stages $S_{\yy}$ in each level $L_k$ is a staged tree associated to some DAG $\GG$ and one of its linear extensions.  
However, this follows in a straightforward way by taking $\GG$ to be the DAG $\GG = ([p],E)$ where $\pa_\GG(k+1) := \Pi_{k+1}$ for all $k\in[p-1]$ and $\pa_\GG(\pi_1) := \emptyset$. 
It is immediate that $\pi$ is a linear extension of $\GG$ and that the staging of $\TT$ coincides with the staging of $\TT_\GG^\pi$. 
\end{proof}

\begin{remark}
Note that, by definition, if $\TT\in \TT_{\mathbb{G}}$ then 
$\MM_{(\TT,\theta)}=\MM(\GG)$ for some DAG $\GG$.
It is likely that the converse holds, in particular if $\TT$
is compatibly labeled and $\MM_{(\TT,\theta)}=\MM(\GG)$ then $\TT\in \TT_{\mathbb{G}}$.
\end{remark}

\section{Balanced Models}
\label{sec: balanced}

The family of balanced staged tree models was introduced in \cite{DG20} to characterize those staged tree models for which a certain pair of associated polynomial ideals coincide (see \cite[Theorem 3.1]{DG20}).  
This observation suggests that the balanced staged tree models are a generalization of decomposable models to more general staged trees.  
In this section, we prove that this is indeed the case (see Theorem~\ref{thm: perfectly balanced and stratified}).

For a staged tree $\TT = (V,E)$ and a node $v\in V$, we let $\TT_v$ denote the rooted subtree of $\TT$ whose root node is $v$. 
If we let $\Lambda_v$ denote the set of root-to-leaf paths in $\TT_v$, then the \emph{interpolating polynomial of $\TT_v$} is 
\[
t(v) := \sum_{\lambda\in\Lambda_v}\prod_{e\in \lambda}\theta(e)
\]
 The polynomial $t(v)$ is an element of the polynomial ring $\R[\Theta_{\TT}]:=\R[\theta(e): e\in E]$ with one indeterminate for each edge label in $\LL$.
When $v$ is the root of $\TT$,  $t(v)$ is called the \emph{interpolating polynomial of $\TT$} \cite{GS18}. 
\begin{definition}
\label{def: balanced}
Let $(\TT,\theta)$ be a staged tree and $v,w\in V$ be two vertices in the same stage with children $\ch_\TT(v) = \{v_0,\ldots, v_k\}$ and $\ch_\TT(w) :=\{w_0,\ldots, w_k\}$, respectively.  
After a possible reindexing, we may assume that $\theta(v\rightarrow v_i) = \theta(w\rightarrow w_i)$ for all $i\in [k]_0$.  
The pair of vertices $v, w$ is \emph{balanced} if
\[
t(v_i)t(w_j) = t(w_i)t(v_j) \text{ in $\R[\Theta_\TT]$ for all $i\neq j\in[k]_0$.}
\]
The staged tree $(\TT,\theta)$ is called \emph{balanced} if every pair of vertices in the same stage is balanced.  
\end{definition}

\begin{example}
Two vertices $v,w$ in a staged tree $(\TT,\theta)$ are in the same position if
$t(v)=t(w)$. The staged tree $(\TT,\theta)$ is \emph{simple} if every pair of vertices
in the same stage is also in the same position. A simple and compatibly labeled
staged tree $(\TT,\theta)$ with levels $(L_1,\ldots,L_k)=(X_1,\ldots, X_p)$
is always balanced (see \cite[Lemma 2.12]{AD19}). The staged tree $\TT_{\GG_2}$ in
Figure~\ref{fig:staged tree representation} is simple and therefore balanced. The Theorem~\ref{thm: perfectly balanced and stratified},
states that the staged trees that represent decomposable models
are balanced. Hoever, balanced and compatibly labeled staged trees
are a larger class.
The two staged trees in Figure~\ref{fig:balanced} are balanced, compatibly labeled, and do not represent a
DAG model. 
\end{example}

In \cite{DG20}, the authors suggested balanced staged tree models as the natural generalization of perfect DAG models based on the fact that balanced staged tree models exhibit similar algebraic properties to perfect DAG models (see Section~\ref{sec: application} for more details).  
The following theorem shows that balanced staged tree models do indeed generalize perfect DAG models. 

\begin{theorem}
\label{thm: perfectly balanced and stratified}
Let $\GG = ([p],E)$ be a DAG, and let $\pi$ be a linear extension of $\GG$.
Then the following are equivalent:
\begin{enumerate}
    \item $\MM(\GG)$ is decomposable,
    \item $\GG$ is a perfect DAG, and
    \item the staged tree $\TT_\GG^\pi$ is balanced.
\end{enumerate}
\end{theorem}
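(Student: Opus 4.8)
The plan is to treat (1)$\Leftrightarrow$(2) as the definition of decomposability recalled above (equivalently, a classical theorem; cf.\ \cite{L96,GHKM01}), and to prove the substantive equivalence (2)$\Leftrightarrow$(3) by establishing (2)$\Rightarrow$(3) together with the contrapositive $\lnot$(2)$\Rightarrow\lnot$(3), computing directly with interpolating polynomials and treating the labels in $\LL$ as indeterminates of $\R[\Theta_{\TT_\GG^\pi}]$, as flagged in the remark above. Throughout I use that, by Proposition~\ref{prop: characterization DAG staged trees}, two nodes $v,w$ of level $L_k$ lie in the same stage exactly when, as configurations in $\RR_{\{\pi_1,\dots,\pi_k\}}$, they agree on the coordinates of $\pa_\GG(\pi_{k+1})$; that their children are then matched by the outcomes of $X_{\pi_{k+1}}$, the edge $v\to v_i$ carrying the label $f(i\mid v_{\pa_\GG(\pi_{k+1})})$; and that the pair $v,w$ is balanced iff $t(v_i)t(w_j)=t(v_j)t(w_i)$ for all outcomes $i\neq j$. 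I also record that, with $F=\{\pi_{k+2},\dots,\pi_p\}$, a node $v_i$ in level $L_{k+1}$ has $t(v_i)=\sum_{u}\prod_{\pi_s\in F}f\bigl(u_{\pi_s}\mid(v,i,u)_{\pa_\GG(\pi_s)}\bigr)$, the sum over configurations $u$ of $F$.

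\textbf{(2)$\Rightarrow$(3).} Fix a same-stage pair $v,w$ in $L_k$ and split $F$ into the set $D$ of strict descendants of $\pi_{k+1}$ in $\GG$ and its complement $N$. Using that $\GG$ is perfect, so that every parent set is a clique and hence a $\pi$-chain, I would prove by induction along a directed path from $\pi_{k+1}$ to a node of $D$, repeatedly applying the clique property to the parents of consecutive vertices on the path, that: (a) every parent of a node of $D$ lies in $D\cup\{\pi_1,\dots,\pi_{k+1}\}$; and (b) every parent of a node of $D$ that lies in $\{\pi_1,\dots,\pi_k\}$ already lies in $\pa_\GG(\pi_{k+1})$. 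Since no node of $N$ can have a parent in $D$, there is no $\GG$-edge between $N$ and $D$, so every monomial of $t(v_i)$ factors into a part over $N$ and a part over $D$; that is, $t(v_i)=A(v)\,B(v,i)$ with $A(v)=\sum_{u_N}\prod_{\pi_s\in N}f\bigl(u_{\pi_s}\mid(v,u_N)_{\pa_\GG(\pi_s)}\bigr)$ and $B(v,i)=\sum_{u_D}\prod_{\pi_s\in D}f\bigl(u_{\pi_s}\mid(v,i,u_D)_{\pa_\GG(\pi_s)}\bigr)$. Here $A(v)$ does not involve the outcome $i$, since $\pi_{k+1}$ parents no node of $N$; and by (a)--(b) the polynomial $B(v,i)$ depends on $(v,i)$ only through the coordinates indexed by $\pa_\GG(\pi_{k+1})\cup\{\pi_{k+1}\}$, so on the stage of $v$, where $v_{\pa_\GG(\pi_{k+1})}$ is fixed, it depends only on $i$, say $B(v,i)=B_i$. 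Hence $t(v_i)t(w_j)=A(v)B_iA(w)B_j=t(v_j)t(w_i)$, so every same-stage pair is balanced, i.e.\ $\TT_\GG^\pi$ is balanced.

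\textbf{$\lnot$(2)$\Rightarrow\lnot$(3).} Assume $\GG$ is not perfect; after deleting any single-state variables, which only add trivial levels to $\TT_\GG^\pi$, we may assume every $X_i$ has at least two states. Pick a node $c$ and two non-adjacent $a,b\in\pa_\GG(c)$ with $\pi^{-1}_a<\pi^{-1}_b$ (hence $\pi^{-1}_b<\pi^{-1}_c$), and set $k+1=\pi^{-1}_b$, so $c\in F$ and $a\notin\pa_\GG(\pi_{k+1})$. Choose $v,w\in L_k$ in the same stage with $v_a\neq w_a$, and distinct outcomes $i\neq j$ of $X_{\pi_{k+1}}$. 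To show $v,w$ is not balanced, apply the ring homomorphism sending to $1$ every label attached to a variable in $F\setminus\{c\}$: it carries $t(v_i)$ to $N\cdot g(v,i)$, where $N$ is a positive integer independent of $v$ and $i$, and $g(v,i)=\sum_{u_c,\,u_P}f\bigl(u_c\mid v_{\pa_\GG(c)\cap\{\pi_1,\dots,\pi_k\}},\,i,\,u_P\bigr)$ with $P=\pa_\GG(c)\cap F$; similarly $t(w_j)\mapsto N\cdot g(w,j)$, and so on. Because $a,b\in\pa_\GG(c)$ while $v_a\neq w_a$ and $i\neq j$, the four nonzero polynomials $g(v,i),g(v,j),g(w,i),g(w,j)$ involve pairwise disjoint sets of indeterminates; as each is a sum of distinct monomials with positive coefficients, $g(v,i)g(w,j)$ and $g(v,j)g(w,i)$ cannot coincide. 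Therefore $t(v_i)t(w_j)\neq t(v_j)t(w_i)$, and $\TT_\GG^\pi$ is not balanced.

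The step I expect to be the main obstacle is the pair of combinatorial claims (a) and (b) used in (2)$\Rightarrow$(3): together they say that in a perfect DAG the entire ``context'' seen by the variables ordered after $\pi_{k+1}$ is already carried by $\pa_\GG(\pi_{k+1})$. Since edge-transitivity fails for general DAGs, the argument genuinely needs the clique property of parent sets, and the delicate point is to organize the induction down a directed path out of $\pi_{k+1}$ so that each use of the clique property forces a would-be ``outside'' parent either into $\pa_\GG(\pi_{k+1})$ (for (b)) or into $D$ (for (a)). Everything else --- the monomial factorization $t(v_i)=A(v)B(v,i)$, the specialization argument in the converse, and the disjointness of supports --- is routine once the stage description of $\TT_\GG^\pi$ from Proposition~\ref{prop: characterization DAG staged trees} is available.
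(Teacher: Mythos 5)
Your proposal is correct, but it packages the argument differently from the paper. The paper first proves a reformulation (Lemma~\ref{lem: simplified balanced}): balancedness of $\TT_\GG^\pi$ is equivalent to the existence of a bijection matching the monomials on the two sides of $t(v_i)t(w_j)=t(v_j)t(w_i)$; for (2)$\Rightarrow$(3) it then writes down an explicit involution (swap the coordinates outside $\de_\GG(\pi_{k+1})$, keep those inside) and checks the factorwise equalities, and for (3)$\Rightarrow$(2) it argues at a collider that the factor indexed by the collider node can be matched in neither of the two possible ways. You bypass the bijection lemma entirely: splitting the future variables into $D=\de_\GG(\pi_{k+1})$ and its complement $N$, and using exactly the two graph-theoretic facts the paper also establishes (in its notation, $\pa_\GG(k)\cap[i]\subseteq\pa_\GG(i+1)$ and $\an_\GG(k)\cap([p]\setminus[i+1])\subseteq\de_\GG(i+1)$, proved by the same clique-property-along-a-directed-path induction you sketch for your claims (a) and (b)), you get the factorization $t(v_i)=A(v)B_i$, from which balance is immediate; and in the converse your specialization homomorphism together with the disjoint-support argument for the four linear forms $g(\cdot,\cdot)$ replaces the paper's informal ``only two ways to satisfy the identity'' discussion. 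Your route buys some rigor: the paper's extraction of a term bijection tacitly uses that each side of the expanded identity is a multiplicity-free sum of monomials, and its converse presumes a factorwise matching, whereas your factorization and evaluation arguments avoid both delicate points (at the cost of not isolating a reusable criterion like Lemma~\ref{lem: simplified balanced}). Two minor caveats: your (a)--(b) are only sketched, but the induction you indicate is precisely the paper's and does go through; and your ``delete single-state variables'' step is not literally a reduction, since removing a degenerate variable can turn a non-perfect DAG into a perfect one, in which case $\TT_\GG^\pi$ genuinely is balanced --- the honest reading is that (3)$\Rightarrow$(2) needs every variable to have at least two states, an assumption the paper's proof also uses implicitly when it chooses $x_i\neq x_i'$ and $s\neq r$.
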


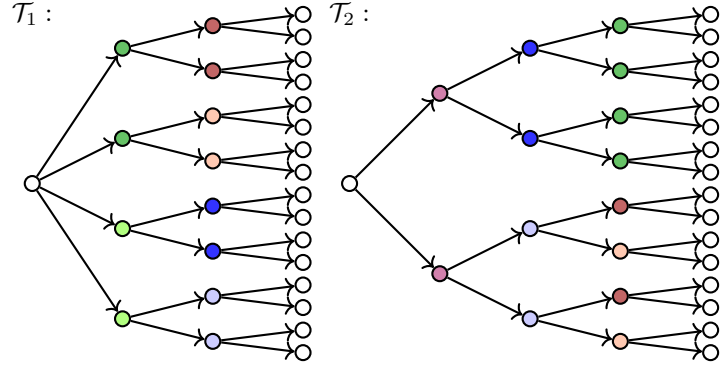
\begin{figure}
    \centering
     \begin{tikzpicture}[thick,scale=0.3]
	
 	 \node[circle, draw, fill=black!0, inner sep=2pt, minimum width=1pt] (w3) at (0,0)  {};

 	 \node[circle, draw, fill=black!0, inner sep=2pt, minimum width=1pt] (w4) at (0,-1) {};
 	 \node[circle, draw, fill=black!0, inner sep=2pt, minimum width=1pt] (w5) at (0,-2) {};
 	 \node[circle, draw, fill=black!0, inner sep=2pt, minimum width=1pt] (w6) at (0,-3) {};
	 
 	 \node[circle, draw, fill=black!0, inner sep=2pt, minimum width=1pt] (v3) at (0,-4)  {};
 	 \node[circle, draw, fill=black!0, inner sep=2pt, minimum width=1pt] (v4) at (0,-5) {};
 	 \node[circle, draw, fill=black!0, inner sep=2pt, minimum width=1pt] (v5) at (0,-6) {};
 	 \node[circle, draw, fill=black!0, inner sep=2pt, minimum width=1pt] (v6) at (0,-7) {};

	 \node[circle, draw, fill=red!60!black!60, inner sep=2pt, minimum width=2pt] (w1) at (-4,-.5) {};
 	 \node[circle, draw, fill=red!60!black!60, inner sep=2pt, minimum width=2pt] (w2) at (-4,-2.5) {}; 

 	 \node[circle, draw, fill=red!70!yellow!30, inner sep=2pt, minimum width=2pt] (v1) at (-4,-4.5) {};
 	 \node[circle, draw, fill=red!70!yellow!30, inner sep=2pt, minimum width=2pt] (v2) at (-4,-6.5) {};

 	 \node[circle, draw, fill=green!60!black!60, inner sep=2pt, minimum width=2pt] (w) at (-8,-1.5) {};

 	 \node[circle, draw, fill=green!60!black!60, inner sep=2pt, minimum width=2pt] (v) at (-8,-5.5) {};


 	 \node[circle, draw, fill=black!0, inner sep=2pt, minimum width=2pt] (w3i) at (0,-8)  {};
 	 \node[circle, draw, fill=black!0, inner sep=2pt, minimum width=2pt] (w4i) at (0,-9) {};
 	 \node[circle, draw, fill=black!0, inner sep=2pt, minimum width=2pt] (w5i) at (0,-10) {};
 	 \node[circle, draw, fill=black!0, inner sep=2pt, minimum width=2pt] (w6i) at (0,-11) {};
	 
 	 \node[circle, draw, fill=black!0, inner sep=2pt, minimum width=2pt] (v3i) at (0,-12)  {};
 	 \node[circle, draw, fill=black!0, inner sep=2pt, minimum width=2pt] (v4i) at (0,-13) {};
 	 \node[circle, draw, fill=black!0, inner sep=2pt, minimum width=2pt] (v5i) at (0,-14) {};
 	 \node[circle, draw, fill=black!0, inner sep=2pt, minimum width=2pt] (v6i) at (0,-15) {};

	 \node[circle, draw, fill=blue!80, inner sep=2pt, minimum width=2pt] (w1i) at (-4,-8.5) {};
 	 \node[circle, draw, fill=blue!80, inner sep=2pt, minimum width=2pt] (w2i) at (-4,-10.5) {};
 	 \node[circle, draw, fill= blue!20 , inner sep=2pt, minimum width=2pt] (v1i) at (-4,-12.5) {};
 	 \node[circle, draw, fill= blue!20, inner sep=2pt, minimum width=2pt] (v2i) at (-4,-14.5) {};

 	 \node[circle, draw, fill=green!60!yellow!50, inner sep=2pt, minimum width=2pt] (wi) at (-8,-9.5) {};

 	 \node[circle, draw, fill=green!60!yellow!50, inner sep=2pt, minimum width=2pt] (vi) at (-8,-13.5) {};

 	 \node[circle, draw, fill=violet!0, inner sep=2pt, minimum width=2pt] (ri) at (-12,-7.5) {};

 	 \node (T2) at (-12,0) {$\TT_{1}:$};


 	 \draw[->]   (ri) --   (w) ;
 	 \draw[->]   (ri) --   (v) ;

 	 \draw[->]   (w) --  (w1) ;
 	 \draw[->]   (w) --  (w2) ;

 	 \draw[->]   (w1) --   (w3) ;
 	 \draw[->]   (w1) --   (w4) ;
 	 \draw[->]   (w2) --  (w5) ;
 	 \draw[->]   (w2) --  (w6) ;

 	 \draw[->]   (v) --  (v1) ;
 	 \draw[->]   (v) --  (v2) ;

 	 \draw[->]   (v1) --  (v3) ;
 	 \draw[->]   (v1) --  (v4) ;
 	 \draw[->]   (v2) --  (v5) ;
 	 \draw[->]   (v2) --  (v6) ;

 	 \draw[->]   (ri) --   (wi) ;
 	 \draw[->]   (ri) -- (vi) ;

 	 \draw[->]   (wi) --  (w1i) ;
 	 \draw[->]   (wi) --  (w2i) ;

 	 \draw[->]   (w1i) --  (w3i) ;
 	 \draw[->]   (w1i) -- (w4i) ;
 	 \draw[->]   (w2i) --  (w5i) ;
 	 \draw[->]   (w2i) --  (w6i) ;

 	 \draw[->]   (vi) --  (v1i) ;
 	 \draw[->]   (vi) --  (v2i) ;

 	 \draw[->]   (v1i) --  (v3i) ;
 	 \draw[->]   (v1i) -- (v4i) ;
 	 \draw[->]   (v2i) -- (v5i) ;
 	 \draw[->]   (v2i) --  (v6i) ;

\end{tikzpicture}
 \begin{tikzpicture}[thick,scale=0.3]

 	 \node[circle, draw, fill=black!0, inner sep=2pt, minimum width=1pt] (w3) at (0,0)  {};

 	 \node[circle, draw, fill=black!0, inner sep=2pt, minimum width=1pt] (w4) at (0,-1) {};
 	 \node[circle, draw, fill=black!0, inner sep=2pt, minimum width=1pt] (w5) at (0,-2) {};
 	 \node[circle, draw, fill=black!0, inner sep=2pt, minimum width=1pt] (w6) at (0,-3) {};
	 
 	 \node[circle, draw, fill=black!0, inner sep=2pt, minimum width=1pt] (v3) at (0,-4)  {};
 	 \node[circle, draw, fill=black!0, inner sep=2pt, minimum width=1pt] (v4) at (0,-5) {};
 	 \node[circle, draw, fill=black!0, inner sep=2pt, minimum width=1pt] (v5) at (0,-6) {};
 	 \node[circle, draw, fill=black!0, inner sep=2pt, minimum width=1pt] (v6) at (0,-7) {};

	 \node[circle, draw, fill=green!60!black!60, inner sep=2pt, minimum width=2pt] (w1) at (-4,-.5) {};
 	 \node[circle, draw, fill=green!60!black!60, inner sep=2pt, minimum width=2pt] (w2) at (-4,-2.5) {}; 

 	 \node[circle, draw, fill=green!60!black!60, inner sep=2pt, minimum width=2pt] (v1) at (-4,-4.5) {};
 	 \node[circle, draw, fill=green!60!black!60, inner sep=2pt, minimum width=2pt] (v2) at (-4,-6.5) {};

 	 \node[circle, draw, fill=blue!80, inner sep=2pt, minimum width=2pt] (w) at (-8,-1.5) {};

 	 \node[circle, draw, fill=blue!80, inner sep=2pt, minimum width=2pt] (v) at (-8,-5.5) {};

 	 \node[circle, draw, fill=violet!70!red!50, inner sep=2pt, minimum width=2pt] (r) at (-12,-3.5) {};

 	 \node[circle, draw, fill=black!0, inner sep=2pt, minimum width=2pt] (w3i) at (0,-8)  {};
 	 \node[circle, draw, fill=black!0, inner sep=2pt, minimum width=2pt] (w4i) at (0,-9) {};
 	 \node[circle, draw, fill=black!0, inner sep=2pt, minimum width=2pt] (w5i) at (0,-10) {};
 	 \node[circle, draw, fill=black!0, inner sep=2pt, minimum width=2pt] (w6i) at (0,-11) {};
	 
 	 \node[circle, draw, fill=black!0, inner sep=2pt, minimum width=2pt] (v3i) at (0,-12)  {};
 	 \node[circle, draw, fill=black!0, inner sep=2pt, minimum width=2pt] (v4i) at (0,-13) {};
 	 \node[circle, draw, fill=black!0, inner sep=2pt, minimum width=2pt] (v5i) at (0,-14) {};
 	 \node[circle, draw, fill=black!0, inner sep=2pt, minimum width=2pt] (v6i) at (0,-15) {};

	 \node[circle, draw, fill=red!60!black!60, inner sep=2pt, minimum width=2pt] (w1i) at (-4,-8.5) {};
 	 \node[circle, draw, fill=red!70!yellow!30, inner sep=2pt, minimum width=2pt] (w2i) at (-4,-10.5) {};

 	 \node[circle, draw, fill= red!60!black!60 , inner sep=2pt, minimum width=2pt] (v1i) at (-4,-12.5) {};
 	 \node[circle, draw, fill= red!70!yellow!30, inner sep=2pt, minimum width=2pt] (v2i) at (-4,-14.5) {};

 	 \node[circle, draw, fill=blue!20, inner sep=2pt, minimum width=2pt] (wi) at (-8,-9.5) {};

 	 \node[circle, draw, fill=blue!20, inner sep=2pt, minimum width=2pt] (vi) at (-8,-13.5) {};

 	 \node[circle, draw, fill=violet!70!red!50, inner sep=2pt, minimum width=2pt] (ri) at (-12,-11.5) {};

 	 \node[circle, draw, fill=black!0, inner sep=2pt, minimum width=2pt] (I) at (-16,-7.5) {};
 	 
 	  \node (T2) at (-16,0) {$\TT_{2}:$};

 	 \draw[->]   (I) --    (r) ;
 	 \draw[->]   (I) --   (ri) ;

 	 \draw[->]   (r) --   (w) ;
 	 \draw[->]   (r) --   (v) ;

 	 \draw[->]   (w) --  (w1) ;
 	 \draw[->]   (w) --  (w2) ;

 	 \draw[->]   (w1) --   (w3) ;
 	 \draw[->]   (w1) --   (w4) ;
 	 \draw[->]   (w2) --  (w5) ;
 	 \draw[->]   (w2) --  (w6) ;

 	 \draw[->]   (v) --  (v1) ;
 	 \draw[->]   (v) --  (v2) ;

 	 \draw[->]   (v1) --  (v3) ;
 	 \draw[->]   (v1) --  (v4) ;
 	 \draw[->]   (v2) --  (v5) ;
 	 \draw[->]   (v2) --  (v6) ;

 	 \draw[->]   (ri) --   (wi) ;
 	 \draw[->]   (ri) -- (vi) ;

 	 \draw[->]   (wi) --  (w1i) ;
 	 \draw[->]   (wi) --  (w2i) ;

 	 \draw[->]   (w1i) --  (w3i) ;
 	 \draw[->]   (w1i) -- (w4i) ;
 	 \draw[->]   (w2i) --  (w5i) ;
 	 \draw[->]   (w2i) --  (w6i) ;

 	 \draw[->]   (vi) --  (v1i) ;
 	 \draw[->]   (vi) --  (v2i) ;

 	 \draw[->]   (v1i) --  (v3i) ;
 	 \draw[->]   (v1i) -- (v4i) ;
 	 \draw[->]   (v2i) -- (v5i) ;
 	 \draw[->]   (v2i) --  (v6i) ;

\end{tikzpicture}


    \caption{Balanced staged trees that are not simple and do not represent a DAG model.}
    \label{fig:balanced}
    \end{figure}
\subsection{Proof of Theorem~\ref{thm: perfectly balanced and stratified}}
Before we present the proof of the theorem we will prove a technical lemma:
\begin{lemma}
\label{lem: simplified balanced}
Let $\GG = ([p],E)$ be a DAG and assume that $\pi = 12\cdots p$ is a linear extension of $\GG$. Then $\TT_\GG$ is balanced
if and only if for every pair of vertices $v,w$ in the same stage  with $v=x_1\cdots x_i, w=x_1'\cdots x_i'\in\RR_{[i]}$
there exists a bijection
\begin{equation*}
\begin{split}
\Phi:& \RR_{[p]\setminus[i+1]}\times \RR_{[p]\setminus[i+1]}\longrightarrow\RR_{[p]\setminus[i+1]}\times \RR_{[p]\setminus[i+1]}, \\
    & (y_{i+2}\cdots y_p, y_{i+2}^\prime\cdots y_p^\prime) \mapsto (z_{i+2}\cdots z_p, z_{i+2}^\prime\cdots z_p^\prime)
\end{split}
\end{equation*}
such that for all $k\geq i+2$ and all $s\neq r \in [d_{i+1}]$
\begin{equation}
\label{eqnL: simplified balanced}
\begin{split}
f&(y_k\mid (x_1\cdots x_i,s,y_{i+2}\cdots y_p)_{\pa_\GG(k)})f(y_k^\prime\mid (x_1^\prime\cdots x_i^\prime,r,y_{i+2}^\prime\cdots y_p^\prime)_{\pa_\GG(k)}) \\
&= f(z_k\mid (x_1^\prime\cdots x_i^\prime,s,z_{i+2}\cdots z_p)_{\pa_\GG(k)})f(z_k^\prime\mid (x_1\cdots x_i,r,z_{i+2}^\prime\cdots z_p^\prime)_{\pa_\GG(k)}).
\end{split}
\end{equation}
\end{lemma}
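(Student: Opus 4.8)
The statement is a purely combinatorial unwinding of Definition~\ref{def: balanced} for the special case $\TT = \TT_\GG^{\pi}$ with $\pi = 12\cdots p$, so the plan is to compute $t(v_i)$ for the children $v_i$ of a node $v$ in $\TT_\GG^{\pi}$ explicitly and then translate the defining equation $t(v_s)t(w_r) = t(w_s)t(v_r)$ into the displayed identity. First I would fix a pair of vertices $v = x_1\cdots x_i$ and $w = x_1'\cdots x_i'$ in the same stage in level $L_i$, with $d_{i+1} := |\RR_{\{i+1\}}|$. Its children in level $L_{i+1}$ are indexed by the outcomes $s\in[d_{i+1}]$ of $X_{i+1}$: write $v_s = x_1\cdots x_i\, s$ and $w_s = x_1'\cdots x_i'\, s$. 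Because $\TT_\GG^{\pi}$ is stratified and uniform, the root-to-leaf paths in $\TT_{v_s}$ are exactly indexed by the completions $y_{i+2}\cdots y_p \in \RR_{[p]\setminus[i+1]}$, and by the construction of the labeling $\theta$ each such path contributes the monomial $\prod_{k\ge i+2} f(y_k \mid (x_1\cdots x_i, s, y_{i+2}\cdots y_p)_{\pa_\GG(k)})$. Hence
\begin{equation*}
t(v_s) = \sum_{\yy\in\RR_{[p]\setminus[i+1]}}\ \prod_{k\ge i+2} f\bigl(y_k \mid (x_1\cdots x_i, s, \yy)_{\pa_\GG(k)}\bigr),
\end{equation*}
and similarly for $t(w_r)$. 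Note here that the first $i+1$ coordinates of the conditioning tuple are determined by the node ($v$ or $w$) and the child index ($s$ or $r$), while the remaining coordinates come from the summation index.

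Next I would expand the two sides of the balance equation $t(v_s)\,t(w_r) = t(w_s)\,t(v_r)$ as polynomials in the indeterminates $\LL$. Each side is a sum over pairs $(\yy,\yy')\in\RR_{[p]\setminus[i+1]}\times\RR_{[p]\setminus[i+1]}$ of a product of two monomials; the left side has summand
\begin{equation*}
\prod_{k\ge i+2} f\bigl(y_k \mid (x_1\cdots x_i, s, \yy)_{\pa_\GG(k)}\bigr)\cdot \prod_{k\ge i+2} f\bigl(y_k' \mid (x_1'\cdots x_i', r, \yy')_{\pa_\GG(k)}\bigr),
\end{equation*}
and the right side has summand obtained by swapping $x_1\cdots x_i \leftrightarrow x_1'\cdots x_i'$ in the prefixes (so the first monomial is conditioned on $(x_1'\cdots x_i', s, \cdot)$ and the second on $(x_1\cdots x_i, r, \cdot)$). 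The polynomial identity $t(v_s)t(w_r) = t(w_s)t(v_r)$ holds in $\R[\Theta_\TT]$ if and only if the two multisets of degree-$2(p-i-1)$ monomials agree, which is exactly the assertion that there is a bijection $\Phi\colon (\yy,\yy')\mapsto(\zz,\zz')$ matching each left-side summand to the equal right-side summand — i.e.\ equation~\eqref{eqnL: simplified balanced}. Conversely, existence of such a $\Phi$ gives the monomial-multiset equality and hence the polynomial identity. I would also record that the reindexing in Definition~\ref{def: balanced} is trivial here: in $\TT_\GG^{\pi}$ the edge $v\to v_s$ and the edge $w\to w_s$ carry labels indexed by the same outcome $s$ of $X_{i+1}$ (this uses compatibility of the labeling together with $v,w$ being in the same stage), so the pairing $\theta(v\to v_s) = \theta(w\to w_s)$ is the identity on $[d_{i+1}]$, matching the indexing by $s,r$ in the lemma.

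Finally I would address the quantifier structure: Definition~\ref{def: balanced} requires balancedness for \emph{every} pair of same-stage vertices and \emph{every} pair of children indices $s\ne r$, so $\TT_\GG^{\pi}$ is balanced iff for every such $v,w,s,r$ the polynomial identity holds, iff (by the monomial-matching argument) for every such $v,w$ there is a \emph{single} bijection $\Phi$ simultaneously witnessing~\eqref{eqnL: simplified balanced} for all $k\ge i+2$ and all $s\ne r$. One subtlety is that a priori the polynomial identity for a fixed $(s,r)$ only yields a bijection depending on $(s,r)$; I would note that since the conditioning tuples' coordinate in position $i+1$ is literally $s$ (resp.\ $r$) and all other data are independent of $(s,r)$, a bijection that works for one ordered pair $(s,r)$ with $s\ne r$ works for all of them, so one $\Phi$ per pair $\{v,w\}$ suffices (and this is how the lemma is phrased). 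The main obstacle is purely bookkeeping: carefully tracking which coordinates of each conditioning argument $(\cdot)_{\pa_\GG(k)}$ are pinned down by the node/child data versus the summation variables, and being careful that the $k = i+1$ factor — the transition $f(s \mid (x_1\cdots x_i)_{\pa_\GG(i+1)})$ on the edge $v\to v_s$ itself — is \emph{not} part of $t(v_s)$ and therefore does not appear in~\eqref{eqnL: simplified balanced}; it is precisely the edge-label equality $\theta(v\to v_s)=\theta(w\to w_s)$ from the same-stage hypothesis that lets us cancel it and reduce to the stated identity on the subtrees.
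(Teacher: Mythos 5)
Your proposal follows essentially the same route as the paper's proof: expand the interpolating polynomials $t(v_s),t(w_r)$ of the children, observe that the balance equation is an equality of homogeneous polynomials all of whose terms have coefficient one, extract a bijection between the monomials, and read off the stated identity; you even address the $(s,r)$-uniformity of $\Phi$, which the paper leaves implicit. The one point you gloss — passing from equality of the matched degree-$2(p-i-1)$ monomials to the factorwise identity for each $k$ — is exactly what the paper dispatches by noting (via stratification) that labels attached to different levels are distinct indeterminates, so it goes through without difficulty.
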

\begin{proof}
Suppose $\TT_\GG$ is balanced and let $v,w$ be two vertices in the same stage as in the statement of this lemma.
Since the pair $v,w$ is balanced, 
\begin{equation}
\label{eqnL: balanced}
t(x_1\cdots x_is)t(x_1^\prime\cdots x_i^\prime r) = t(x_1\cdots x_ir)t(x_1^\prime\cdots x_i^\prime s)
\end{equation}
for all $s\neq r \in [d_{i+1}]$. Next we rewrite the two factors on each side of the equality (\ref{eqnL: balanced}) 
using the definition of $t(\cdot)$ and multiply out the two expressions on each side. For the leftmost factor
we have
\[
t(x_1\cdots x_i s) = \sum_{x_{i+2}\cdots x_p\in\RR_{[p]\setminus[i+1]}}\prod_{k = i+2}^pf(x_k\mid (x_1\cdots x_i,s,x_{i+2}\cdots x_p)_{\pa_\GG(k)}).
\] 
Using a similar expression for the other three factors, multiplying out and writing as a double summation, the equation~\eqref{eqnL: balanced} becomes
\begin{equation}
\label{eqnE: balanced}
\begin{split}
\sum_{\substack{x_{i+2}\cdots x_p \in \RR_{[p]\setminus[i+1]} \\
     x_{i+2}'\cdots x_p' \in \RR_{[p]\setminus[i+1]}} } \!\!\!\!\!\!\!\!\!\!\!\!\!\!\! \prod f(x_{k}|(x_1\cdots x_i,s,x_{i+2}\cdots
     x_p)_{\pa_{\GG}(k)}) f(x_{k}'|(x_1'\cdots x_i',r,x_{i+2}'\cdots x_p')_{\pa_{\GG}(k)}) \\
 = \sum_{\substack{\overline{x_{i+2}\cdots x_p }\in \RR_{[p]\setminus[i+1]} \\
     \overline{x_{i+2}'\cdots x_p'} \in \RR_{[p]\setminus[i+1]}} } \!\!\!\!\!\!\!\!\!\!\!\!\!\!\! \prod 
     f(x_{k}|(x_1\cdots x_i,r,\overline{x_{i+2}\cdots x_p})_{\pa_{\GG}(k)}) f(x_{k}'|(x_1'\cdots x_i',s,\overline{x_{i+2}'\cdots x_p'})_{\pa_{\GG}(k)})
     \end{split}.
\end{equation}
Where the product inside both summations in (\ref{eqnE: balanced}) is taken from $k=i+2$ to $k=p$.
The expression (\ref{eqnE: balanced}) is an equality in the polynomial ring where the labels in $\LL$ are treated as indeterminates. 
Since each of the terms in the sum is a monomial of degree $2(p-i-1)$, (\ref{eqnE: balanced}) is an equality of homogenous
polynomials. Moreover, each side of (\ref{eqnE: balanced}) has the same number of terms and each term 
has coefficient equal to one. Hence there exists a bijection between terms in the left-hand-side of 
(\ref{eqnE: balanced}) and terms in its right-hand-side. 
We denote this bijection by $\Phi:\RR_{[p]\setminus[i+1]}\times \RR_{[p]\setminus[i+1]}\longrightarrow\RR_{[p]\setminus[i+1]}\times \RR_{[p]\setminus[i+1]}$ 
as in the statement of the lemma. 
Under this bijection it is true that
\begin{equation}
\label{eqnS: balanced}
\begin{split}
\prod_{k=i+2}^{p}f(y_{k}|(x_1\cdots x_i,s,y_{i+2}\cdots
     y_p)_{\pa_{\GG}(k)}) f(y_{k}'|(x_1'\cdots x_i',r,y_{i+2}'\cdots y_p')_{\pa_{\GG}(k)}) 
 = \\ \prod_{k=i+2}^{p}
     f(z_{k}|(x_1\cdots x_i,r,z_{i+2}\cdots z_p)_{\pa_{\GG}(k)}) f(z_{k}'|(x_1'\cdots x_i',s,z_{i+2}'\cdots 
     z_p')_{\pa_{\GG}(k)}).
     \end{split}
\end{equation}
Since $\TT_\GG$ is stratified, any two vertices in the same stage must be in the same level. Thus 
from (\ref{eqnS: balanced}) we obtain the desired equality
\begin{equation}
\label{eqnS: Sbalanced}
\begin{split}
f(y_{k}|(x_1\cdots x_i,s,y_{i+2}\cdots
     y_p)_{\pa_{\GG}(k)}) f(y_{k}'|(x_1'\cdots x_i',r,y_{i+2}'\cdots y_p')_{\pa_{\GG}(k)}) 
 = \\ 
     f(z_{k}|(x_1\cdots x_i,r,z_{i+2}\cdots z_p)_{\pa_{\GG}(k)}) f(z_{k}'|(x_1'\cdots x_i',s,z_{i+2}'\cdots 
     z_p')_{\pa_{\GG}(k)}).
     \end{split}
\end{equation}
To check the other direction, it sufficient to note that we can trace backwards the steps in the proof to conclude
that the pair $v,w$ is balanced provided there exists a bijection $\Phi$ that satisfies (\ref{eqnS: Sbalanced}).
\end{proof}

We can then prove Theorem~\ref{thm: perfectly balanced and stratified}. 

\subsubsection{Proof of Theorem~\ref{thm: perfectly balanced and stratified}}
\label{subsec: proof}
For all necessary graph theory terminology, we refer the reader to \cite{L96}.
The equivalence $(1) \Leftrightarrow (2)$ is \cite[Proposition 3.28]{L96}.
$(3)\Leftarrow(2)$: Let $\GG$ be a perfect DAG with linear extension $\pi = 12\cdots p$.  
Then $(1,2,\ldots, p)$ is a perfect elimination ordering for the skeleton of $\GG$. 
Suppose that $v,w\in V$ are in the same stage.  
Since $\TT_\GG$ is stratified, $v$ and $w$ are in the same level, say level $i$.
Therefore, $v = x_1\cdots x_i$ and $w = x_1^\prime\cdots x_i^\prime$ for some outcomes $x_k,x_k^\prime\in\RR_{\{k\}}$ for $k\in[i]$.  
Using the characterization in Lemma~\ref{lem: simplified balanced} we must find a bijection
\begin{equation*}
\begin{split}
\Phi:& \RR_{[p]\setminus[i+1]}\times \RR_{[p]\setminus[i+1]}\longrightarrow\RR_{[p]\setminus[i+1]}\times \RR_{[p]\setminus[i+1]}, \\
    & (y_{i+2}\cdots y_p, y_{i+2}^\prime\cdots y_p^\prime) \mapsto (z_{i+2}\cdots z_p, z_{i+2}^\prime\cdots z_p^\prime)
\end{split}
\end{equation*}
such that for all $k\geq i+2$
\begin{equation}
\label{eqn: simplified balanced}
\begin{split}
f&(y_k\mid (x_1\cdots x_i,s,y_{i+2}\cdots y_p)_{\pa_\GG(k)})f(y_k^\prime\mid (x_1^\prime\cdots x_i^\prime,r,y_{i+2}^\prime\cdots y_p^\prime)_{\pa_\GG(k)}) \\
&= f(z_k\mid (x_1^\prime\cdots x_i^\prime,s,z_{i+2}\cdots z_p)_{\pa_\GG(k)})f(z_k^\prime\mid (x_1\cdots x_i,r,z_{i+2}^\prime\cdots z_p^\prime)_{\pa_\GG(k)})
\end{split}
\end{equation}
whenever $s\neq r\in[d_{i+1}]$. 
To this end, we define the bijection $\Phi$ in the following way:  
Given a pair of outcomes $(y_{i+2}\cdots y_p, y_{i+2}^\prime\cdots y_p^\prime) \in \RR_{[p]\setminus[i+1]}\times \RR_{[p]\setminus[i+1]}$ define the pair $(z_{i+2}\cdots z_p, z_{i+2}^\prime\cdots z_p^\prime)$ by the rule:
For $k\geq i+2$,
\begin{itemize}
	\item if $i+1$ is an ancestor of $k$ then set $z_k:= y_k$ and $z_k^\prime := y_k^\prime$, and
	\item if $i+1$ is not an ancestor of $k$ then set $z_k:= y_k^\prime$ and $z_k^\prime := y_k$.
\end{itemize}
Notice that $\Phi$ is a bijection since it is an involution.
To prove that \eqref{eqn: simplified balanced} is satisfied with respect to the chosen bijection $\Phi$, we must check that it holds for all $k\geq i+2$ whenever $s\neq r\in[d_{i+1}]$.  
In the following, suppose that $s\neq r\in[d_{i+1}]$, and let $k\geq i+2$.  
It follows that $i+1$ is either an ancestor of $k$ or it is not.  
We will show that \eqref{eqn: simplified balanced} holds in both of these two cases, which will complete the proof.  

In the first case, suppose that $i+1\in\an_\GG(k)$. 
To show that \eqref{eqn: simplified balanced} holds in this case, 
    it suffices to show that 
    $(x_1\cdots x_i,s,y_{i+2}\cdots y_p)_{\pa_\GG(k)} = (x_1^\prime\cdots x_i^\prime,s,z_{i+2}\cdots z_{i+p})_{\pa_\GG(k)}$ and
    $(x_1\cdots x_i,s,y_{i+2}^\prime\cdots y_p^\prime)_{\pa_\GG(k)} = (x_1^\prime\cdots x_i^\prime,s,z_{i+2}^\prime\cdots 
    z_{i+p}
    ^\prime)_{\pa_\GG(k)}$. Breaking these two equalities into equalities of subsequences, it suffices
    to prove $(x_1\cdots x_i)_{\pa_\GG(k)}=(x_1^\prime \cdots x_i^\prime)_{\pa_\GG(k)}$, $(y_{i+2}\cdots y_p)_{\pa_\GG(k)}=
    (z_{i+2}\cdots z_p)_{\pa_\GG(k)}$ and $(y_{i+2}^\prime\cdots y_{p}^\prime)_{\pa_\GG(k)}=(z_{i+2}^\prime\cdots z_p^\prime)
    _{\pa_\GG(k)}$.
 
To see that $(x_1\cdots x_i)_{\pa_\GG(k)}=(x_1^\prime \cdots x_i^\prime)_{\pa_\GG(k)}$ holds, we first note that for $k=i+1$, since $x_1\cdots x_i$ and $x_1^\prime\cdots x_i^\prime$ are in the same stage then
\[
f(x_{i+1}\mid (x_1\cdots x_i)_{\pa_\GG(i+1)}) = f(x_{i+1}\mid (x_1^\prime\cdots x_i^\prime)_{\pa_\GG(i+1)})
\]
for all $x_{i+1}\in[d_{i+1}]$, and so $(x_1\cdots x_i)_{\pa_\GG(i+1)} = (x_1^\prime\cdots x_i^\prime)_{\pa_\GG(i+1)}$.
Thus, to show that $(x_1\cdots x_i)_{\pa_\GG(k)} = (x_1^\prime \cdots x_i^\prime)_{\pa_\GG(k)}$ for $k\geq i+2$, it suffices to show that $\pa_\GG(k)\cap[i]\subset\pa_\GG(i+1)$.  
To this end, suppose that $j\in\pa_\GG(k)$ and that $j<i+1$.  
Since $\pi = 12 \cdots p$ is a linear extension of $\GG$, we know that any descendant $\ell\in[p]$ of $i+1$ (including $k$) satisfies 
$\ell>i+1$.  
Therefore, since $i+1\in\an_\GG(k)$, $j\in\pa_\GG(k)$ and $j< i+1$, we know that there exists $k^\prime\in\pa_\GG(k)$ with $k^\prime \neq j$ satisfying $i+1\geq k'>k$.  
It then follows by the chordality of $\tilde{\GG}$ (the skeleton of $\GG$), and the fact that $\pi = 12\cdots p$ is a linear extension of $\GG$, that $j\in\pa_\GG(k^\prime)$.  
If $k'=i+1$ we are done, otherwise iterating this argument shows that $j\in\pa_\GG(i+1)$.  
Thus, we conclude that $(x_1\cdots x_i)_{\pa_\GG(k)} = (x_1^\prime\cdots x_i^\prime)_{\pa_\GG(k)}$.

To see that 
$(y_{i+2}\cdots y_p)_{\pa_\GG(k)} = (z_{i+2}\cdots z_p)_{\pa_\GG(k)}$, it suffices to show the slightly stronger statement that 
\[
(y_{i+2}\cdots y_p)_{\an_\GG(k)\cap[p]\setminus[i+1]} = (z_{i+2}\cdots z_p)_{\an_\GG(k)\cap[p]\setminus[i+1]}.
\]
By construction of the bijection $\Phi$, we know that 
\[
(y_{i+2}\cdots y_p)_{\de_\GG(i+1)} = (z_{i+2}\cdots z_p)_{\de_\GG(i+1)}.
\]
Hence, to prove the desired statement, it suffices to show that $\an_\GG(k)\cap[p]\setminus[i+1]\subset \de_\GG(i+1)$. 
To see this, suppose that $k^{\prime\prime}\in \an_\GG(k)\cap[p]\setminus[i+1]$.  
Let $[i+1,k]$ denote all nodes in $\GG$ that lie on a directed path from $i+1$ to $k$.  
If $k^{\prime\prime}\in[i+1,k]$, then $k^{\prime\prime}\in\de_\GG(i+1)$.  
So suppose $k^{\prime\prime}\not\in[i+1,k]$.  
Then there exists $k^{\prime\prime\prime}\in\de_\GG(k^{\prime\prime})$ such that $k^{\prime\prime\prime}\in[i+1,k]$.  
(For instance, $k$ is one such node.)  
Pick such a $k^{\prime\prime\prime}$ so that, over all such choices, the minimal length directed path from $k^{\prime\prime}$ to an element of $[i+1,k]$ is of shortest length.  
Further pick $k^{\prime\prime\prime}$ such that, over all such choices satisfying the previous condition, $k^{\prime\prime\prime}$ has minimum value in the natural order on $[p]$. 

Since $k^{\prime\prime\prime}\in\de_\GG(k^{\prime\prime})$ then there exists a directed path in $\GG$
\[
k^{\prime\prime} \rightarrow a_0 \rightarrow a_1 \rightarrow \cdots \rightarrow a_m \rightarrow k^{\prime\prime\prime}.
\]
We assume that this path is the shortest possible path from $k^{\prime\prime}$ to $[i+1,k]$, based on our previous assumptions.  
Since $k^{\prime\prime\prime}\in[i+1,k]$, there also exists a directed path in $\GG$
\[
i+1 \rightarrow b_0 \rightarrow b_1 \rightarrow \cdots \rightarrow b_t \rightarrow k^{\prime\prime\prime}.
\]
Since $b_t, a_m\in\pa_\GG(k^{\prime\prime\prime})$ and $\GG$ is perfect, we know that $b_t$ and $a_m$ are adjacent in $\GG$.  
Since our path from $k^{\prime\prime}$ to $[i+1,k]$ was to chosen to have shortest possible length, and since $k^{\prime\prime\prime}$ has minimum value over all such paths, we know that $b_t <a_m$. 
Thus, $b_t\rightarrow a_m$ is an edge of $\GG$.  
Since $b_t,a_{m-1}\in\pa_\GG(a_m)$, and since $\GG$ is perfect, we know that $b_t\rightarrow a_{m-1}$ is also an edge of $\GG$.  
Otherwise, we did not pick the shortest path to $[i+1,k]$.  
Iterating this argument shows that $b_t\rightarrow k^{\prime\prime}$ is an edge of $\GG$.  
This implies that $k^{\prime\prime}\in[i+1,k]$, which is a contradiction.  

Hence, we conclude that $\an_\GG(k)\cap[p]\setminus[i+1]\subset\de_\GG(i+1)$, and $(y_{i+2}\cdots y_p)_{\pa_\GG(k)} = (z_{i+2}\cdots z_p)_{\pa_\GG(k)}$. The same argument proves $(y_{i+2}^\prime\cdots y_{p}^\prime)_{\pa_\GG(k)}=(z_{i+2}^\prime\cdots z_p^\prime)
    _{\pa_\GG(k)}$.

It now only remains to check that \eqref{eqn: simplified balanced} holds whenever $i+1\notin\an_\GG(k)$. 
Since $i+1\notin\an_\GG(k)$, it follows that $i+1\notin\pa_\GG(k)$.  
So to prove the desired equality, it suffices to show that $(y_{i+2}\cdots y_p)_{\pa_\GG(k)} = (z_{i+2}^\prime\cdots z_p^\prime)_{\pa_\GG(k)}$ and $(y_{i+2}^\prime \cdots y_p^\prime)_{\pa_\GG(k)} = (z_{i+2}\cdots z_p)_{\pa_\GG(k)}$. 
However, since $i+1\notin\an_\GG(k)$, it also follows that no $j\in\pa_\GG(k)$ is in $\de_\GG(i+1)$.  
Hence, by the definition of $\Phi$, for all $j\in\pa_\GG(k)\cap[p]\setminus[i+1]$ we have that $z_j = y_j^\prime$ and $z_j^\prime = y_j$, which completes the first direction of the proof.

$(3)\Rightarrow(2)$: We prove the contrapositive, i.e. if $\GG$ is not perfect then $\TT_{\GG}$ is not balanced. Without loss of generality,
we assume the nodes of $\GG$ have a topological order. 
That is, if $u\to v$ is an arrow in $\GG$ then $u$ is less than $v$. If $\GG$ is not
a perfect DAG then $\GG$ has a collider $i\to l \leftarrow j$. 
By assumption $i<l$ and $j<l$, and we further assume $i<j$.
Since $i\notin\pa_{\GG}(j)$, there exist two outcomes $x_1\cdots x_p,x_1'\cdots x_p' \in \RR$ such that $x_i\neq x_i'$ and 
$(x_1\cdots x_p)_{\pa_{\GG}(j)}=(x_1'\cdots x_p')_{\pa_{\GG}(j)}$. The latter equality implies the vertices
$x_1\cdots x_{j-1}$ and $x_1'\cdots x_{j-1}'$ in $\TT_{\GG}$ are in the same stage. We show that the balanced condition cannot possibly hold for 
these two vertices. 

As in the proof of $(3)\Leftarrow(2)$, using Lemma~\ref{lem: simplified balanced}, the balanced condition for the vertices 
$x_1\cdots x_{j-1}$ and $x_1'\cdots x_{j-1}'$ holds if and only if there exist a bijection 
$\Phi:\RR_{[p]\setminus[j]}\times \RR_{[p]\setminus[j]}\longrightarrow\RR_{[p]\setminus[j]}\times \RR_{[p]\setminus[j]}$  such that for all $k\geq j+1$ and all $s,r\in[d_j]$
\begin{equation}
\label{eqn: simplified balanced 2}
\begin{split}
&f(y_k\mid (x_1\cdots x_{j-1},s,y_{j+1}\cdots y_p)_{\pa_\GG(k)})f(y_k^\prime\mid (x_1^\prime\cdots x_{j-1}^\prime,r,y_{j+1}^\prime\cdots y_p^\prime)_{\pa_\GG(k)}) \\
&= f(z_k\mid (x_1^\prime\cdots x_{j-1}^\prime,s,z_{j+1}\cdots z_p)_{\pa_\GG(k)})f(z_k^\prime\mid (x_1\cdots x_{j-1},r,z_{j+1}^\prime\cdots z_p^\prime)_{\pa_\GG(k)}).
\end{split}
\end{equation}
Thus to show $\TT_{\GG}$ is not balanced, we show that (\ref{eqn: simplified balanced 2}) cannot hold for $k=l$.
The only two ways to satisfy (\ref{eqn: simplified balanced 2}) are if 
\begin{align*}
f(y_l\mid (x_1\cdots x_{j-1},s,y_{j+1}\cdots y_p)_{\pa_\GG(k)}) &= f(z_l\mid (x_1^\prime\cdots x_{j-1}^\prime,s,z_{j+1}\cdots z_p)_{\pa_\GG(k)}),
\end{align*}
with $y_{l}=z_{l}$ or 
\begin{align*}
f(y_l\mid (x_1\cdots x_{j-1},s,y_{i+2}\cdots y_p)_{\pa_\GG(k)}) &= f(z_k^\prime\mid (x_1\cdots x_{j-1},r,z_{i+2}^\prime\cdots z_p^\prime)_{\pa_\GG(k)}),
\end{align*}
with $y_{l}=z_{l}'$.
The first equation cannot hold for any choice of $z_{j+1}\cdots z_{p}\in \RR_{[p]\setminus [j]}$ because 
$i\in\pa_{\GG}(l)$ and by construction $x_i\neq x_i'$ hence $(x_1\cdots x_{j-1}sy_{j+1}\cdots y_p)_{\pa_{\GG}(l)}
\neq (x_1'\cdots x_{j-1}'sz_{j+1}\cdots z_p)_{\pa_{\GG}(l)}$. The second equation cannot hold for any choice of 
 $z_{j+1}'\cdots z_{p}'\in \RR_{[p]\setminus [j]}$ because $j\in \pa_{\GG}(l)$ and $s\neq r$ so 
 $(x_1\cdots x_{j-1}sy_{j+1}\cdots y_p)_{\pa_{\GG}(l)}\neq (x_1\cdots x_{j-1}rz_{j+1}'\cdots z_p')_{\pa_{\GG}(l)}$. 
 Thus, $\TT_{\GG}$ is not balanced. \hfill $\square$

\section{Algebraic consequences for toric models}
\label{sec: application}

A major endeavour in algebraic statistics is to identify which statistical models can be implicitly defined as the intersection of the space of model parameters with an algebraic variety \cite{S19} and, for such models, identify when the defining algebraic variety admits special properties.  
One commonly investigated question is: when is the defining algebraic variety toric?
In this case the model  is  called a \emph{toric model}.

A discrete DAG model $\MM(\GG)$ is toric when $\GG$ is a perfect DAG \cite[Proposition 3.28]{L96}.
Analogously, a staged tree model $\MM_{(\TT,\theta)}$ is toric if $(\TT,\theta)$ is a balanced staged tree \cite{DG20}. Theorem~\ref{thm: perfectly balanced and stratified}, establishes that the condition for being
toric for staged tree models is equivalent to the condition of being toric for DAG models when we restrict to staged trees in $\TT_{\mathbb{G}}$. This implies equality between certain ideals
associated to a discrete decomposable model. In this section we
explain the relation between these ideals and summarize them in
Corolllary~\ref{cor:ultimate truth}.

Given a discrete random vector $X_{[p]}$ with state space $\RR$ and a DAG $\GG = ([p],E)$, we define two collections of indeterminates: $D = \{p_{\xx}: \xx\in\RR\}$ and $U = \{q_{i;x_i;\xx_{\pa_\GG(i)}}: i\in[p], x_i\in\RR_{\{i\}},\xx_{\pa_\GG(i)}\in\RR_{\pa_\GG(i)}\}$. 
It follows from equation~\eqref{eqn: DAG factorization} that the 
Zariski closure of $\MM(\GG)$ is the algebraic variety defined by  the vanishing of the kernel of the map of polynomial rings
\[
\Phi_\GG: \R[D]\rightarrow\R[U]/\mathfrak{q}; \qquad \Phi_\GG: p_{\xx}\mapsto \prod_{i\in[p]}q_{i;x_i;\xx_{\pa_\GG(i)}},
\]
where 
\[
\mathfrak{q} = \langle 1-\sum_{x_i\in\RR_{\{i\}}}q_{i;x_i;\xx_{\pa_\GG(i)}}: i\in[p],\xx_{\pa_\GG(i)}\in\RR_{\pa_\GG(i)}\rangle.
\]
That is, $\MM(\GG)=\Delta_{|\RR|-1}^{\circ}\cap V(\ker(\Phi_\GG))$, we refer to \cite{GSS05} for more details about the defining
ideal of $\MM(\GG)$.
Let $\tilde\GG$ denote the skeleton (i.e. the underlying undirected graph of $\GG$), and $\CC_{\tilde\GG}$ its set of maximal cliques.
We define another the set of indeterminates $W = \{\phi_{\xx_C} : C\in\CC_{\tilde\GG}, \xx\in\RR\}$, and a second map of polynomial rings
\[
\Phi_{\tilde\GG}: \R[D]\rightarrow\R[W]; \qquad\Phi_{\tilde\GG}: p_{\xx}\mapsto\prod_{C\in\CC_{\tilde\GG}}\phi_{\xx_C},
\]
whose kernel is a toric ideal.
It follows from \cite[Theorem 4.4]{GMS06} that $\ker(\Phi_\GG) = \ker(\Phi_{\tilde\GG})$ whenever $\GG$ is perfect, and hence $\MM(\GG)$ is toric. 

Given a staged tree $(\TT,\theta)$ with $\TT = (V,E)$ and labeling $\theta: E\rightarrow \LL$, we  define the polynomial ring $\R[z,\LL]$ and the ideal 
$\mathfrak{q}' = \langle 1-\sum_{e\in E(v)}\theta(v) : v\in V\rangle$. 
In \cite{DG20}, the authors showed that the Zariski closure of $\MM_{(\TT,\theta)}$ is defined by the vanishing of the kernel of the map 
\[
\Psi_\TT: \R[D]\rightarrow\R[z,\LL]/\mathfrak{q}';
\qquad
\Psi_\TT: p_\xx \mapsto z\cdot\prod_{e\in\lambda(v)}\theta(e).
\]
They also considered the kernel of the toric map
\[
\Psi_\TT^\toric: \R[D]\rightarrow\R[z,\LL]; \qquad \Psi_\TT^\toric: p_\xx \mapsto z\cdot\prod_{e\in\lambda(v)}\theta(e),
\]
and they showed that $\TT$ is balanced if and only if $\ker(\Phi_\TT) = \ker(\Psi_\TT^\toric)$.
Hence, $\MM_{(\TT,\theta)}$ is toric whenever $\TT$ is balanced.  

While both the result of \cite{GMS06} and \cite{DG20} show that perfect DAG models and balanced staged tree models, respectively, are toric via a coincidence of ideals, it is not a priori clear that the coincidence of ideals $\ker(\Psi_{\TT_\GG}) = \ker(\Psi_{\TT_\GG}^\toric)$ implies the coincidence of ideals $\ker(\Phi_\GG) = \ker(\Phi_{\tilde\GG})$; i.e., that the identified toric staged tree models generalize the identified toric DAG models. 
The next corollary establishes this to be the case.
\begin{corollary}
\label{cor:ultimate truth} Let $\GG$ be a DAG and $\TT_{\GG}$ its
staged tree representation with respect to some linear extension.
The following are equivalent:
\begin{enumerate}
    \item $\GG$ is perfect,
    
    \item $\ker(\Phi_{\GG})=\ker(\Phi_{\tilde{\GG}})$, and 
    
    \item $\ker(\Psi_{\TT_\GG})=\ker(\Psi_{\TT_\GG}^{\toric})$.
    \end{enumerate}
    If any of $(1),(2)$ or $(3)$ hold, then $\ker(\Phi_{\tilde\GG}) = \ker(\Phi_\GG) =\ker(\Psi_{\TT_\GG}) = \ker(\Psi_{\TT_\GG}^\toric)$.
\end{corollary}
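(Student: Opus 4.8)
The plan is to derive the corollary by assembling Theorem~\ref{thm: perfectly balanced and stratified} with the two ideal-theoretic results already recorded, after one preliminary observation that holds for every DAG $\GG$ and every linear extension: the ideals $\ker(\Phi_\GG)$ and $\ker(\Psi_{\TT_\GG})$ are equal. Indeed, the codomains $\R[U]/\mathfrak{q}$ and $\R[z,\LL]/\mathfrak{q}'$ are integral domains (each sum-to-one relation eliminates one generator, so each quotient is a polynomial ring), so $\ker(\Phi_\GG)$ and $\ker(\Psi_{\TT_\GG})$ are prime and their vanishing loci in $\R[D]$ are the Zariski closures of the images of the respective parametrizations. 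Intersecting these images with $\Delta^\circ_{|\RR|-1}$ recovers $\MM(\GG)$ and $\MM_{(\TT_\GG,\theta)}$, which coincide by Section~\ref{subsec: staged trees for DAGs}; hence the two Zariski closures agree, and since both ideals are prime they are equal. I would also record that $\ker(\Psi_{\TT_\GG}^\toric) \subseteq \ker(\Psi_{\TT_\GG})$ always, because $\Psi_{\TT_\GG}$ factors as $\Psi_{\TT_\GG}^\toric$ followed by the quotient map modulo $\mathfrak{q}'$.

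With these in hand, I would prove the three equivalences. For $(1)\Leftrightarrow(3)$: Theorem~\ref{thm: perfectly balanced and stratified} gives that $\GG$ is perfect if and only if $\TT_\GG$ is balanced, while \cite[Theorem 3.1]{DG20} gives that $\TT_\GG$ is balanced if and only if $\ker(\Psi_{\TT_\GG}) = \ker(\Psi_{\TT_\GG}^\toric)$; chaining these yields the equivalence. For $(1)\Rightarrow(2)$: this is \cite[Theorem 4.4]{GMS06}. For $(2)\Rightarrow(1)$: if $\ker(\Phi_\GG) = \ker(\Phi_{\tilde\GG})$, then $\MM(\GG) = \Delta^\circ_{|\RR|-1}\cap V(\ker(\Phi_{\tilde\GG}))$ is the undirected graphical model determined by the maximal cliques of $\tilde\GG$; thus $\MM(\GG)$ is at once a directed and an undirected graphical model, hence decomposable, hence $\GG$ is perfect by \cite[Proposition 3.28]{L96}.

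Finally, suppose any, hence all, of $(1)$--$(3)$ hold. Then $\ker(\Phi_{\tilde\GG}) = \ker(\Phi_\GG)$ by $(2)$, $\ker(\Phi_\GG) = \ker(\Psi_{\TT_\GG})$ by the preliminary observation, and $\ker(\Psi_{\TT_\GG}) = \ker(\Psi_{\TT_\GG}^\toric)$ by $(3)$, which together give the asserted four-way equality of ideals.

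I expect the only delicate point to be the preliminary observation $\ker(\Phi_\GG) = \ker(\Psi_{\TT_\GG})$: one must check that both parametrizations are \emph{honest}, i.e., that each has Zariski-dense image in the variety cut out by its (prime) kernel, so that equality of the two statistical models genuinely forces equality of the two defining ideals rather than merely of their real points. Everything else is a direct assembly of Theorem~\ref{thm: perfectly balanced and stratified}, \cite{DG20}, \cite{GMS06}, and the classical theory of decomposable graphical models in \cite{L96}.
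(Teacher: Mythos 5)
Your treatment of the three equivalences follows the paper's route: $(1)\Leftrightarrow(3)$ is exactly the paper's argument (Theorem~\ref{thm: perfectly balanced and stratified} combined with \cite[Theorem 3.1]{DG20}), and for $(1)\Leftrightarrow(2)$ the paper simply invokes \cite[Theorem 4.4]{GMS06} in both directions. Your hand-rolled argument for $(2)\Rightarrow(1)$ is therefore not needed, and as written it silently uses two facts you do not establish: that $\Delta^\circ_{|\RR|-1}\cap V(\ker(\Phi_{\tilde\GG}))$ is precisely the undirected graphical model of $\tilde\GG$ (a Hammersley--Clifford-type statement), and that a model which is simultaneously a DAG model and the undirected model of the skeleton forces the DAG to be perfect, which is not literally \cite[Proposition 3.28]{L96}. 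These are repairable, but citing \cite{GMS06} as the paper does is cleaner.

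The genuine gap is your preliminary observation that $\ker(\Phi_\GG)=\ker(\Psi_{\TT_\GG})$ for every DAG. With the definitions of Section~\ref{sec: application} this is false: telescoping the sum-to-one generators of $\mathfrak{q}$ gives $\Phi_\GG\bigl(\sum_{\xx}p_\xx\bigr)=1$, so $1-\sum_{\xx}p_\xx\in\ker(\Phi_\GG)$, whereas $\Psi_{\TT_\GG}\bigl(\sum_{\xx}p_\xx\bigr)\equiv z$ modulo $\mathfrak{q}'$, so $1-\sum_{\xx}p_\xx\notin\ker(\Psi_{\TT_\GG})$. Your argument ``same model, prime kernels, hence equal ideals'' fails at its first step because the two parametrizations do not have the same Zariski closure: the homogenizing variable $z$ makes the closed image of the $\Psi_{\TT_\GG}$-parametrization the cone over the model, while $V(\ker(\Phi_\GG))$ sits inside the hyperplane $\sum_{\xx}p_\xx=1$. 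Equality of the two models inside $\Delta^\circ_{|\RR|-1}$ therefore does not transfer to equality of these kernels, and your derivation of the final four-way equality collapses; the ``delicate point'' you flag (honest, Zariski-dense parametrizations) is real but secondary to this normalization/homogenization mismatch. Be aware that the corollary as printed carries the same tension --- already for a single binary variable $\ker(\Phi_\GG)=\langle 1-p_0-p_1\rangle$ while the other three kernels vanish --- so the intended comparison must be read modulo the normalization (equivalently after setting $z=1$), in which setting a density argument like yours can be repaired. The paper's own proof sidesteps the issue entirely: it establishes only the two equivalences via the citations above and never compares $\ker(\Phi_\GG)$ with $\ker(\Psi_{\TT_\GG})$ directly.
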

\begin{proof}
$(1)\Leftrightarrow (2)$: This follows from \cite[Theorem 4.4]{GMS06}. $(1) \Leftrightarrow (3)$: By an application of Theorem~\ref{thm: perfectly balanced and stratified} and \cite[Theorem 3.1]{DG20}, we see that $\GG$ is perfect if and only if $\ker(\Psi_{\TT_\GG}) = \ker(\Psi_{\TT_\GG}^\toric)$. 
\end{proof}



\section{Future Directions in Statistics}
\label{sec: statistical applications}
The family of decomposable models plays an important role in probabilistic inference via DAGs.  
When one wishes to answer a probabilistic query such as, ``What is $P(X_i)$?'' given data drawn from a joint distribution $P(X_1,\ldots,X_p)$ Markov to a DAG $\GG$, the standard approach is to identify a chordal covering for the DAG (i.e., a chordal graph constructed by adding additional edges to the skeleton of $\GG$), and then form a clique tree which can be used to dynamically answer the query \cite{KF09}.  The nodes of the clique tree corresponds to cliques in the chordal covering of $\GG$, and the number of nodes in each of these cliques gives an upper bound on the complexity of answering the probabilistic query.  Hence, it is best to identify a chordal covering of the DAG that has smallest possible clique sizes.  The complexity bound for probabilistic inference via clique trees for the given DAG, called the \emph{treewidth} of the DAG, is defined to be one less than the size of a maximal clique in such a minimal chordal covering.  

It follows that decomposable models (i.e, chordal graphs) with small treewidth are desirable from the perspective of complexity of probabilistic inference.  In this work, we have established the balanced staged trees as both a combinatorial and algebraic generalization of decomposable models.  It would therefore be interesting to know if this generalization also generalizes the nice statistical properties of decomposable models in regards to probabilistic inference; that is, do balanced staged trees play the same role as decomposable models when conducting probabilistic infererence in context-specific settings via staged trees?  Does the notion of treewidth naturally generalize to balanced staged tree models in such as way as to offer analogous complexity bounds for context-specific probabilistic inference?  Exploring such questions would be very natural statistical follow-up work to the result of Theorem~\ref{thm: perfectly balanced and stratified}.

\bigskip

\noindent
{\bf Acknowledgements}. 
Liam Solus was supported a Starting Grant (No. 2019-05195) from Vetenskapsr\aa{}det (The Swedish Research Council), and by the Wallenberg AI, Autonomous Systems and Software Program (WASP) funded by the Knut and Alice Wallenberg Foundation. Eliana Duarte was supported by the Deutsche Forschungsgemeinschaft DFG under grant 314838170, GRK 2297 MathCoRe.


\end{document}